\documentclass[11pt]{scrartcl}
\usepackage{graphicx} 
\usepackage{fullpage}
\usepackage{amsmath, amsthm, amssymb}
\usepackage{thmtools}
\usepackage[shortlabels]{enumitem}
\usepackage{hyperref}
\usepackage{enumitem}

\usepackage{booktabs, multirow}

\newtheorem{theorem}{Theorem}[section]
\newtheorem{lemma}[theorem]{Lemma} 


\newtheorem{question}[theorem]{Question}
\newtheorem{conjecture}[theorem]{Conjecture}


\theoremstyle{definition}
\newtheorem{definition}[equation]{Definition} 

\theoremstyle{remark}
\newtheorem{claim}{Claim}

\usepackage{mathtools}
\usepackage{xcolor}

\title{Nearly all known Euclidean Ramsey sets are subsoluble}
\author{Natalie Behague}
\newcommand{\kriz}{K\v{r}\'{i}\v{z}}

\begin{document}

\maketitle

\begin{abstract}
A finite set $X$ in a Euclidean space $\mathbb{R}^d$ is called \emph{Ramsey} if for every $k$ there exists an integer $n$ such that whenever $\mathbb{R}^n$ is coloured with $k$ colours, there is a monochromatic copy of $X$. Graham conjectured that all spherical sets are Ramsey, but progress on this conjecture has been slow. A key result of K\v{r}\'{i}\v{z} is that  all sets that embed in sets that are acted on transitively by a soluble group are Ramsey. We show that for nearly all known examples of Ramsey sets the converse is true, with only two possible exceptions.
\end{abstract}
\section{Introduction}
A finite set $X$ in a Euclidean space $\mathbb{R}^d$ is called \emph{Ramsey} if for every $k$ there exists an integer $n$ such that whenever $\mathbb{R}^n$ is coloured with $k$ colours, there is a monochromatic isometric copy of $X$. An early paper \cite{originaleuclidram}  in the study of Euclidean Ramsey theory proved that every Ramsey set is \emph{spherical}, that is, it lies on the surface of some $d$-dimensional sphere. Graham conjectured that the converse holds.
\begin{conjecture}[\cite{graham}] \label{conj:Graham}
    Every spherical set is Ramsey.
\end{conjecture}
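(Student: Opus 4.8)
The plan is to build every spherical set out of pieces that are already known to be Ramsey, using the two classical engines of the subject: the theorem of Frankl--R\"odl that every non-degenerate simplex is Ramsey, and the product theorem that the Cartesian product of two Ramsey sets is again Ramsey. The target reduction is to show that an arbitrary spherical set $X = \{x_1, \dots, x_m\} \subseteq \mathbb{R}^d$ embeds isometrically into a product $S_1 \times \cdots \times S_t \subseteq \mathbb{R}^{n_1} \times \cdots \times \mathbb{R}^{n_t}$ in which each factor $S_i$ is a set whose Ramseyness is already established. Subset-closure --- any subset of a Ramsey set is Ramsey, since a monochromatic copy of the superset contains one of the subset --- would then complete the argument.

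First I would fix a sphericality normal form. As $X$ is spherical we may translate so that $X$ lies on a sphere of radius $r$ about the origin, so $|x_i|^2 = r^2$ for every $i$, and the pairwise squared distances are recovered from $|x_i - x_j|^2 = 2r^2 - 2\langle x_i, x_j\rangle$. This is the datum the product construction must match: squared distances add coordinatewise across a Cartesian product, so an isometric embedding of $X$ into $S_1 \times \cdots \times S_t$ is exactly a way of writing the matrix $\bigl(|x_i - x_j|^2\bigr)$ as a sum $D^{(1)} + \cdots + D^{(t)}$ in which each $D^{(l)}$ is the squared-distance matrix of $m$ (not necessarily distinct) points lying in a known Ramsey factor $S_l$.

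Next I would assemble the colouring argument from the factors along the iterated product (tensor) scheme of Erd\H{o}s--Graham--Montgomery--Rothschild--Spencer. Given a $k$-colouring of a large $\mathbb{R}^N$ with $N = n_1 + \cdots + n_t$, one argues block by block: the Ramsey property of $S_t$ applied along the last coordinate block pins down a colouring that is constant on a copy of $S_t$ inside most fibres, inducing a colouring, with boundedly many colours, of the space of such fibres; one then applies the Ramsey property of $S_{t-1}$ to that induced colouring, and so on. A Hales--Jewett-style product Ramsey theorem controls the blow-up in the number of colours at each stage, guaranteeing that after $t$ steps a genuinely monochromatic copy of $S_1 \times \cdots \times S_t$, hence of $X$, survives. \kriz{}'s theorem is precisely the special case in which the factors $S_l$ arise from the successive abelian quotients of a transitive soluble group acting on a superset of $X$.

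The hard part --- and the reason the conjecture is still open --- is the embedding step: there is no known way to realise an \emph{arbitrary} spherical set as a subset of a product of already-Ramsey sets, i.e. to produce the decomposition $(|x_i - x_j|^2) = D^{(1)} + \cdots + D^{(t)}$ in general. The group-theoretic route of \kriz{} cannot bridge this gap, because (as the present paper quantifies) not every spherical set embeds in a set carrying a transitive soluble, or even merely transitive, group action, so the convenient supply of abelian-quotient factors is unavailable. A complete proof therefore appears to demand either a genuinely new family of explicitly Ramsey building blocks strictly richer than simplices and their products, or a direct density or Hales--Jewett argument carried out on the sphere itself that bypasses any factorisation of the distance matrix. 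Supplying such an argument is the crux I would expect to obstruct this plan.
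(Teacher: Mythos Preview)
The statement you were asked to prove is Graham's \emph{conjecture}; the paper does not prove it, and indeed explicitly describes it as open (``progress on this conjecture has been slow''). There is therefore no ``paper's own proof'' to compare against, and you should not expect your proposal to succeed.

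To your credit, you recognise this yourself: your final paragraph correctly identifies the embedding step as the obstruction and concedes that no known mechanism produces the required decomposition $(|x_i - x_j|^2) = D^{(1)} + \cdots + D^{(t)}$ for an arbitrary spherical $X$. But then what you have written is not a proof proposal at all --- it is a sketch of the standard product-and-building-block framework together with an honest admission that the framework does not close. The first two thirds of your write-up (normal form, additivity of squared distances across products, iterated product Ramsey) are all correct and well known; they simply do not touch the actual difficulty.

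One further point: your remark that ``not every spherical set embeds in a set carrying a \ldots\ transitive group action'' is the content of the Leader--Russell--Walters construction, and it is exactly why Conjectures~\ref{conj:Graham} and~\ref{conj:rival} are genuinely distinct. So even a full resolution of the subtransitivity or subsolubility questions raised in this paper would not settle Graham's conjecture; some spherical sets would remain untouched by any group-theoretic route.
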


This conjecture is considered the most important in Euclidean Ramsey theory and yet progress has been slow. It is known that rectangular parallelepipeds (a.k.a.~bricks)~\cite{originaleuclidram}, triangles~\cite{triangles}, (non-degenerate) simplices~\cite{simplices}, isosceles trapezia (a.k.a.~isosceles trapezoids)\footnote{an isosceles trapezium, also called an isosceles trapezoid, is a quadrilateral with one pair of parallel sides and the other pair of sides equal in length.}~\cite{trapezia} and all regular polytopes~\cite{kriz, cantwell} are Ramsey.  
A key breakthrough, which was critical in proving that all regular polytopes are Ramsey, is the following result of \kriz.

\begin{theorem}[\cite{kriz}]\label{thm:Kriz}
    Let $X \subset \mathbb{R}^d$ be a finite transitive set. If there is a soluble group $G$ that acts transitively on $X$ then $X$ is Ramsey.
\end{theorem}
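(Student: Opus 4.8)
The plan is to prove the formally stronger statement that $X$ is \emph{sphere-Ramsey}: there is a radius $\rho$ (at least the circumradius of $X$) such that for every $k$ there is an $n$ for which every $k$-colouring of the sphere of radius $\rho$ in $\mathbb{R}^n$ contains a monochromatic congruent copy of $X$. Since a colouring of $\mathbb{R}^n$ restricts to a colouring of any sphere sitting inside it, sphere-Ramsey implies Ramsey, so this suffices. The reason to pass to the spherical version is that the class of sphere-Ramsey sets enjoys closure properties that are not known for the class of Ramsey sets, and it is these that drive the induction. So I would first record that the class of sphere-Ramsey sets is closed under (i) passing to a subset lying on the same sphere, which is immediate, and (ii) Cartesian products, where a factor of radius $\rho_1$ and a factor of radius $\rho_2$ combine --- after recentring both at the origin --- into a product of radius $\sqrt{\rho_1^2+\rho_2^2}$. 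Property (ii) is the analogue, inside spheres, of the classical product theorem of Erd\H{o}s, Graham, Montgomery, Rothschild, Spencer and Straus, and I would prove it the same way: iterate, and invoke the Hales--Jewett theorem to synchronise the monochromatic copies found in the fibres, keeping track of how the radius grows.

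With this machinery in hand, the proof goes by induction along a subnormal series $1 = G_0 \trianglelefteq G_1 \trianglelefteq \cdots \trianglelefteq G_m = G$ with each quotient cyclic of prime order, which exists precisely because $G$ is soluble (equivalently, one can induct on the derived length). For the base case $G$ is abelian, and after quotienting by the --- necessarily normal --- stabiliser of a point we may take $G$ to be a finite abelian group acting simply transitively on $X$ by isometries fixing the centroid $c$ of $X$. Decomposing the orthogonal representation of $G$ on $\mathrm{span}(X-c)$ into isotypic components, each occurring with multiplicity one, and noting that the orbit of a generic vector in each two-dimensional (rotation) component is the vertex set of a regular polygon, one sees that $X$ is congruent to a subset of a Cartesian product of regular polygons. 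By the closure properties it therefore suffices to know that every regular polygon is sphere-Ramsey; this is the genuine combinatorial content of the base case, and it follows from the Hales--Jewett theorem, by embedding the grid $(\mathbb{Z}_m)^t$ as a subset of a sphere in $\mathbb{R}^{2t}$ so that combinatorial lines become regular $m$-gons. (Arranging that the copy produced has exactly the prescribed size, rather than merely being a similar copy --- which is all that the Hales--Jewett or Gallai machinery yields directly in Euclidean space --- is the point at which working inside a fixed sphere, as opposed to in all of $\mathbb{R}^n$, is genuinely used, and it needs a little care.)

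For the inductive step take $N = G_{m-1}$, so that $G/N$ is cyclic of prime order $p$ and $N$ has shorter composition length. The orbits $Y_1,\dots,Y_r$ of $N$ on $X$ are mutually congruent, because $N$ is normal in $G$ and $G$ is transitive, and $N$ acts transitively on each of them, so by the inductive hypothesis every $Y_i$ is sphere-Ramsey. If $N$ is already transitive on $X$ we are done; otherwise $G/N \cong \mathbb{Z}_p$ permutes the $p$ orbits $Y_1,\dots,Y_p$ cyclically. What remains --- and this is where I expect essentially all of the difficulty to be concentrated --- is to show that a transitive set $X = Y_1\cup\cdots\cup Y_p$ glued together from sphere-Ramsey pieces $Y_i$ that are cyclically permuted by $\mathbb{Z}_p$ is itself sphere-Ramsey. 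The obstacle is that $X$ is a genuinely \emph{twisted} union of the $Y_i$: already for the smallest non-abelian soluble group $G = S_3$ one gets six-point configurations (two suitably positioned equilateral triangles) that need not be congruent to any honest Cartesian product of a triangle with a two-element set, so the closure properties alone do not apply. Instead one must run a two-level colouring argument inside a highly coloured sphere: first use the base case to locate a monochromatic copy of the regular $p$-gon attached to the cyclic quotient, and then, simultaneously and compatibly over each of its $p$ vertices, use the spherical product-Ramsey argument to build monochromatic copies of $Y_1$ that fit together into a copy of $X$. Making the two levels interact correctly --- propagating the colour class through the construction, controlling the dimensions and radii that appear, and verifying that the twist between consecutive fibres can always be absorbed into an ambient rotation of the sphere --- is the technical heart of the proof and the step I expect to take up most of the work.
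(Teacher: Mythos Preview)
The paper does not contain a proof of this theorem. Theorem~\ref{thm:Kriz} is quoted from \kriz's paper~\cite{kriz} as background, with no argument supplied here; the present paper uses it as a black box to deduce that subsoluble sets are Ramsey and then focuses entirely on the converse direction (showing various known Ramsey sets are subsoluble). So there is no ``paper's own proof'' against which to compare your proposal.

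As for the proposal itself: your outline is broadly faithful to the shape of \kriz's original argument --- pass to the sphere-Ramsey property to gain closure under products, induct along a subnormal series with prime cyclic quotients, and in the inductive step glue the $N$-orbits together using the $\mathbb{Z}_p$ action of $G/N$. You are also right that the genuine content lives in that last gluing step, and you are candid that you have not carried it out. Two remarks on what you have written. First, your base case is more elaborate than necessary: once you have the inductive machinery, you can run the induction all the way down to the trivial group, so the only ``base case'' you actually need is that a single point is sphere-Ramsey; there is no need for a separate representation-theoretic reduction of abelian orbits to products of regular polygons. Second, the inductive step is not merely a matter of ``absorbing a twist into an ambient rotation'': what \kriz\ actually proves is a self-strengthening of the sphere-Ramsey property (in his language, that the set is ``strongly spherically Ramsey''), and it is this strengthened hypothesis that one carries through the induction and that makes the two-level colouring argument close. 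Your sketch does not yet isolate or state that strengthening, and without it the plan as written has a real gap at exactly the point you flag as the hardest.
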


 (\kriz~actually also proved the stronger result that if there is a soluble group $G$ that acts on $X$ with at most two orbits, then $X$ is Ramsey -- we will return to this point at the end of the paper.) 
Inspired by Theorem~\ref{thm:Kriz}, we make the following definitions.

\begin{definition} We say a finite set $X \subset \mathbb{R}^d$ is
\begin{enumerate}
    \item  \emph{soluble} if there is a soluble group that acts transitively on $X$, and
    \item  \emph{subsoluble} if there is a soluble set $Y$ containing a copy of $X$.
\end{enumerate}
Note that $Y$ can lie in a higher-dimensional space than $X$.
\end{definition}

By Theorem~\ref{thm:Kriz}, we know that all soluble sets are Ramsey, and as a consequence all subsoluble sets are Ramsey. 
In this paper we will explore the converse, and ask whether there are any Ramsey sets that are not subsoluble. For example, it is easy to observe that rectangular parallelepipeds are soluble (see Section~\ref{sec:directprod}).  Karamanlis~\cite{Karamanlis} generalised the proof of Frankl and R\"{o}dl that all simplices are Ramsey to obtain the following result.
\begin{theorem}[\cite{Karamanlis}]
    Every (non-degenerate) simplex embeds into a regular polygonal torus, and is therefore subsoluble.
\end{theorem}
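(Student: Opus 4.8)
The plan is to recast the problem in terms of the convex cone of Euclidean squared--distance matrices and then finish with a soft topological argument, rather than by exhibiting an explicit embedding. First I set up a dictionary. Realise the vertices of a regular $n$-gon of circumradius $\rho$, centred at the origin of its own plane, as $\rho\bigl(\cos\tfrac{2\pi k}{n},\sin\tfrac{2\pi k}{n}\bigr)$ for $k\in\mathbb Z$. Then an isometric embedding of an $m$-simplex $v_0,\dots,v_m$ into a Cartesian product $P_1\times\cdots\times P_L$ of regular polygons, with $P_l$ an $n_l$-gon of circumradius $\rho_l$, is exactly a choice of integers $k^{(l)}_a$ and reals $\rho_l>0$ with
\[
\|v_a-v_b\|^2=\sum_{l=1}^{L}\rho_l^2\cdot 2\Bigl(1-\cos\tfrac{2\pi(k^{(l)}_a-k^{(l)}_b)}{n_l}\Bigr)\qquad\text{for all }a,b .
\]
Consequently the simplex embeds into \emph{some} regular polygonal torus if and only if its squared--distance matrix $D=(\|v_a-v_b\|^2)$ lies in the convex cone $\mathcal Q$ generated by the ``polygon matrices'' $E$, $E_{ab}=2\bigl(1-\cos\tfrac{2\pi(k_a-k_b)}{n}\bigr)$, as $n\ge3$ and the integer tuple $(k_a)$ vary. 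Each polygon matrix is itself the squared--distance matrix of a planar configuration, so $\mathcal Q$ is a convex cone contained in the closed convex cone $\mathcal E$ of all Euclidean squared--distance matrices on $m+1$ labelled points.

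Second I show $\overline{\mathcal Q}=\mathcal E$. Given Euclidean $D$, realise it by points in $\mathbb R^m$ and split $D=\sum_{c=1}^{m}T^{(c)}$, where $T^{(c)}$ is the squared--distance matrix of the $c$-th coordinates, i.e.\ of a configuration $a_0,\dots,a_m$ on a line. It suffices to place each $T^{(c)}$ in $\overline{\mathcal Q}$. Scale the line configuration to be tiny and wrap it onto a fine regular $N$-gon by sending $a_a$ to the vertex $\floor{\eps a_a N/2\pi}$; the resulting polygon matrix has entries $2\bigl(1-\cos\phi_{ab}\bigr)$ with $\phi_{ab}=\eps(a_a-a_b)+O(1/N)$, so $\eps^{-2}E\to\bigl((a_a-a_b)^2\bigr)=T^{(c)}$ entrywise as $\eps\to0$ with $N$ large compared with $1/\eps$. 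Hence $T^{(c)}\in\overline{\mathcal Q}$, and as $\overline{\mathcal Q}$ is a closed convex cone, $D\in\overline{\mathcal Q}$; combined with $\mathcal Q\subseteq\mathcal E$ this gives $\overline{\mathcal Q}=\mathcal E$.

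Finally the soft step. Under the classical linear isomorphism $D\mapsto G$ sending a squared--distance matrix to the Gram matrix of the (centred) configuration, $\mathcal E$ corresponds to the positive semidefinite cone on $\mathbf 1^\perp$, whose interior is the positive definite matrices --- that is, $\operatorname{int}\mathcal E$ is precisely the set of non-degenerate $m$-simplices. In particular $\mathcal E$ is full-dimensional in the $\binom{m+1}{2}$-dimensional space of symmetric $(m+1)\times(m+1)$ matrices with zero diagonal, hence so is $\overline{\mathcal Q}$, hence so is the convex set $\mathcal Q$, and so $\operatorname{int}\mathcal Q\neq\emptyset$. Since a convex set with nonempty interior satisfies $\operatorname{int}\overline{\mathcal Q}=\operatorname{int}\mathcal Q$, we conclude $\operatorname{int}\mathcal E=\operatorname{int}\mathcal Q\subseteq\mathcal Q$, so the squared--distance matrix of any non-degenerate simplex lies in $\mathcal Q$. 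By the dictionary the simplex embeds isometrically into a product of regular polygons; such a product is acted on transitively by the abelian (hence soluble) group $\mathbb Z_{n_1}\times\cdots\times\mathbb Z_{n_L}$ of coordinatewise rotations, so it is soluble and the simplex is subsoluble.

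I expect the main obstacle to be conceptual rather than computational: recognising that one should prove the \emph{soft} identity $\overline{\mathcal Q}=\mathcal E$ and invoke the interior-versus-closure property of convex sets, instead of trying to construct the embedding by hand. A direct construction is genuinely awkward, because the chord lengths of a regular polygon are rigid algebraic numbers, so one cannot in general match prescribed distances exactly using any fixed finite family of polygons; the trade-off is that this argument gives no control on the number or the sizes of the polygons needed. The approximation estimate and the book-keeping in the dictionary are routine; the one point to check carefully there is that the map produced from $D\in\mathcal Q$ really is an injective isometric copy of the simplex lying among the vertices of the torus.
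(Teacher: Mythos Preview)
The paper does not prove this theorem; it is quoted from Karamanlis, and the only information given is that his argument generalises the Frankl--R\"{o}dl proof that simplices are Ramsey. So there is no in-paper proof to compare against, and I can only assess your argument on its own merits.

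Your proof is correct. The dictionary step is accurate: writing the squared-distance matrix $D$ as a nonnegative combination of polygon matrices is exactly the condition for an isometric embedding into a product of regular polygons. The density step showing $\overline{\mathcal Q}=\mathcal E$ is sound; the asymptotics $\eps^{-2}E_{ab}\to(a_a-a_b)^2$ hold once $\eps\to0$ and $\eps N\to\infty$, as you say, and the coordinate decomposition $D=\sum_c T^{(c)}$ then pushes every Euclidean distance matrix into $\overline{\mathcal Q}$. The final step is a clean use of the standard fact that a convex set $C$ with nonempty interior satisfies $\operatorname{int}\overline C=\operatorname{int}C$, together with the identification of $\operatorname{int}\mathcal E$ with non-degenerate simplices via the linear isomorphism to the PSD cone on $\mathbf 1^\perp$. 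The injectivity worry you raise at the end is harmless: distinct vertices of a non-degenerate simplex are at positive distance, hence their images in the torus are distinct.

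In spirit your route is almost certainly different from Karamanlis's. A generalisation of Frankl--R\"{o}dl would be an explicit combinatorial embedding with some control on the torus; your argument is purely existential, using convexity to avoid constructing anything. The trade-off you already note --- no bound whatsoever on the number or the orders of the polygon factors --- is exactly the cost of the soft interior-equals-interior-of-closure step.
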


Inspired by the known proofs in Euclidean Ramsey theory, Leader, Russell and Walters \cite{blocksets} proposed a `rival' conjecture to Graham's.
\begin{conjecture}[\cite{blocksets}]\label{conj:rival}
    A set $X \subset \mathbb{R}^d$ is Ramsey if and only if it is (isometric to) a subset of a finite transitive set.
\end{conjecture}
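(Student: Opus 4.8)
The plan is to split the biconditional into its two directions and treat them with entirely different tools. For the \emph{``if''} direction (subtransitive $\Rightarrow$ Ramsey) I would first reduce to the statement that \emph{every} finite transitive set is Ramsey. This reduction is immediate: if $X$ is isometric to a subset of a finite transitive set $T$ and $T$ is Ramsey, then $X$ is Ramsey, since any $k$-colouring of $\mathbb{R}^n$ with a monochromatic copy of $T$ automatically contains a monochromatic copy of $X$. Thus the whole ``if'' direction rests on transitive sets being Ramsey. Theorem~\ref{thm:Kriz} already supplies this whenever the transitive set is soluble, so the genuine task is to push \kriz's argument from soluble groups to an arbitrary finite group $G$ acting transitively. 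I would attempt this by induction along a composition series of $G$, aiming to establish an extension-type Ramsey lemma: if $1 \to N \to G \to G/N \to 1$ and both the $N$-orbits and the induced $G/N$-action give Ramsey sets, then the $G$-set is Ramsey.

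The obstacle here is precisely the feature that solubility hides. \kriz's induction is assembled from \emph{abelian} (indeed cyclic) building blocks, where a regular-polygon/torus averaging argument does the work, and solubility guarantees that every composition factor is abelian. For non-soluble $G$ the composition series contains non-abelian simple factors, and no analogue of the polygonal averaging is known for a transitive action of a non-abelian simple group. A softer fallback would be to find a soluble subgroup $H \le G$ acting on $X$ with few orbits and invoke the stronger two-orbit version of Theorem~\ref{thm:Kriz} noted after its statement; but in general the fewest orbits achievable by a soluble subgroup can be large, so this does not close the gap.

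For the \emph{``only if''} direction (Ramsey $\Rightarrow$ subtransitive) I would argue by contrapositive and construct a colouring: given $X$ that is not subtransitive, produce for some $k$ a $k$-colouring of every $\mathbb{R}^n$ with no monochromatic copy of $X$. The model is the classical proof that Ramsey sets are spherical, which manufactures a congruence-invariant of $X$ --- a linear combination $\sum_i c_i |x_i|^2$ over an affine dependence, where $\sum_i c_i = 0$ and $\sum_i c_i x_i = 0$, which is nonzero exactly when $X$ is non-spherical --- and then colours $\mathbb{R}^n$ by a periodic function of $|x|^2$ so that monochromatic copies are impossible. The hope is to find a strictly richer family of congruence-invariant obstructions that vanish precisely on subtransitive sets, and to convert each into a colouring by the same mechanism.

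This ``only if'' direction is where I expect the real difficulty to lie, and it is the heart of why the conjecture remains open. There is no known clean invariant-theoretic or combinatorial characterisation of subtransitivity to feed into the colouring machine, and --- more strikingly --- no spherical set is currently known to be non-Ramsey at all. Since a spherical but non-subtransitive set would simultaneously refute Graham's Conjecture~\ref{conj:Graham} and serve as a test case for this direction, a full proof of Ramsey $\Rightarrow$ subtransitive would force one either to resolve that tension or to develop a genuinely new lower-bound technique certifying non-Ramseyness beyond non-sphericity. The realistic programme is therefore incremental: verify the conjecture on ever-larger explicit families (as this paper does on the subsoluble side) while searching in parallel for the missing colouring obstruction, rather than expecting one uniform argument to settle both directions at once.
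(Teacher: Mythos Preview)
The statement you were asked to prove is Conjecture~\ref{conj:rival}, which is an \emph{open conjecture} attributed to Leader, Russell and Walters~\cite{blocksets}. The paper does not prove it; it is quoted as motivation and discussed again only in the open-problems section. So there is no ``paper's own proof'' to compare against, and your proposal is not in fact a proof either --- it is (as you yourself acknowledge in the final paragraph) a sketch of a programme together with an honest account of where each direction breaks down.

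Your diagnosis of the obstacles is broadly accurate and consistent with the paper's framing. For the ``if'' direction you correctly isolate that \kriz's induction relies on abelian composition factors, so that non-soluble transitive groups are the genuine barrier; your fallback of passing to a soluble subgroup with few orbits is exactly the mechanism behind Lemma~\ref{lem:2-orbit-plus-stuff implies soluble}, and the paper itself notes (Section~\ref{sec:openqs}) that this idea does not extend to the 120-cell or 600-cell. For the ``only if'' direction you rightly observe that the only known obstruction to being Ramsey is non-sphericity, and that no spherical set is known to be non-Ramsey --- so any purported proof of this direction would either have to disprove Graham's conjecture or invent a new lower-bound technique. That said, none of this constitutes a proof attempt in the usual sense: both directions remain open, and your write-up should be read as a survey of the difficulties rather than a proposed argument.
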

Leader, Russell and Walters \cite{blocksets} showed that there exist cyclic quadrilaterals that are spherical but not a subset of any finite transitive set, and therefore the two conjectures are distinct.

To make progress on Conjecture~\ref{conj:rival}, Leader, Russell and Walters~\cite{blocksets} gave a slightly stronger but purely combinatorial reframing called the Block Sets Conjecture which has a similar statement to the famous Hales-Jewett theorem. They proved the first non-trivial case of the Block Sets conjecture and noted that as a consequence, for any $\alpha, \beta,\gamma \in \mathbb{R}$ and $i,j \in \mathbb{N}$, the set $X \subset \mathbb{R}^{i+j+1}$ consisting of permutations of the coordinates $(\underbrace{\alpha,\ldots,\alpha}_{i},\underbrace{\beta,\ldots,\beta}_{j},\gamma)$ is Ramsey. They observe that if $\alpha, \beta, \gamma$ are algebraically independent, these sets are not themselves soluble and speculate that they may therefore be genuinely new Ramsey sets that cannot be found as a consequence of Theorem~\ref{thm:Kriz}.

Perhaps surprisingly, we show that these sets, while not themselves soluble, do embed into soluble sets. Our methods actually extend to almost every known Ramsey set, with at most two exceptions.

\begin{theorem}\label{thm:main}
    The following sets are subsoluble:
    \begin{enumerate}
        \item \label{item:blocksets} The set of permutations of $(\underbrace{\alpha,\ldots,\alpha}_{i},\underbrace{\beta,\ldots,\beta}_{j},\gamma)$
        for any $\alpha,\beta,\gamma \in \mathbb{R}$ and $i,j \ge 0$,
        \item all regular polytopes except for possibly the 120-cell and 600-cell, and \label{item:polytopes}
         \item isosceles trapezia.
    \end{enumerate}
\end{theorem}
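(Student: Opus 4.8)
The plan is to exhibit each of the three families inside a finite set that carries a transitive soluble group of isometries. The main reservoir of such sets is products of regular polygons: writing $R_N$ for the vertex set of a regular $N$-gon, the product $R_{N_1}\times\cdots\times R_{N_m}$ admits the abelian (hence soluble) group $\mathbb{Z}_{N_1}\times\cdots\times\mathbb{Z}_{N_m}$ acting transitively by rotations, one per plane; the same holds if some factors are replaced by cubes $\{0,1\}^{d}$ (with $\mathbb{Z}_2^{d}$ acting by coordinate reflections). I will freely use three closure properties: a subset of a subsoluble set is subsoluble; an orthogonal direct sum $X\oplus Y=\{(x,y):x\in X,\,y\in Y\}$ of subsoluble sets is subsoluble (take the product of the witnessing groups); and a set similar to a subsoluble set is subsoluble (conjugate the group action by the similarity). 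Thus it suffices to place a similar copy of each set inside a product as above, or inside some other finite set with a transitive soluble symmetry group --- the available inputs being Theorem~\ref{thm:Kriz}, Karamanlis's embedding of simplices into regular polygonal tori, and explicit constructions.

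For part~\ref{item:blocksets}, write $n=i+j+1$ and regard a point of $X$ as a word $w\colon[n]\to\{\alpha,\beta,\gamma\}$ with exactly one $\gamma$. The naive idea --- replace each letter by a point of a fixed ``alphabet gadget'' and take the $n$-fold product of gadgets --- cannot work: a finite set with a transitive group of isometries is always spherical (its points are equidistant from their barycentre), so a soluble gadget is spherical, whereas $\{\alpha,\beta,\gamma\}$ is collinear; no gadget realises the three distances $|\alpha-\beta|,|\alpha-\gamma|,|\beta-\gamma|$ at once. The plan is to use that there is only \emph{one} $\gamma$. Put $\gamma=\alpha+\delta$ and split $w=\bar w+\delta\,e_{\pi(w)}$, where $\bar w\in\{\alpha,\beta\}^{n}$ is $w$ with its $\gamma$ changed to $\alpha$ (a cube vertex) and $\pi(w)\in[n]$ marks the position of the $\gamma$; then
\[
 \|w-w'\|^{2}=\|\bar w-\bar w'\|^{2}+2\delta\,\langle\,\bar w-\bar w',\,e_{\pi(w)}-e_{\pi(w')}\,\rangle+\delta^{2}\,\|e_{\pi(w)}-e_{\pi(w')}\|^{2}.
\]
The cube term $\|\bar w-\bar w'\|^{2}$ already sits in a soluble set; the task is to accommodate the last two terms --- above all the cross term --- inside a product of regular polygons by encoding the position $\pi(w)$ with a bounded number of extra polygon factors of carefully chosen sizes and radii, so that the cross term is exactly reproduced. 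Once such an encoding is written down, checking that all pairwise distances agree reduces to finitely many cases and is routine. Constructing the encoding --- equivalently, turning a continuous realisation of $X$ on a product of circles into one that lands \emph{exactly} on grids of polygon vertices --- is the step I expect to be hard; it is precisely what fails for general cyclic quadrilaterals, so it must genuinely use the block structure.

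For part~\ref{item:polytopes} I would argue case by case. Regular polygons are soluble via $\mathbb{Z}_{N}$; regular simplices via a transitive cyclic group on the vertices; hypercubes via $\mathbb{Z}_2^{d}$; cross-polytopes via $\mathbb{Z}_2\wr\mathbb{Z}_d$; the $24$-cell has symmetry group of order $2^{7}\cdot 3^{2}$, soluble by Burnside's $p^{a}q^{b}$ theorem and transitive on its $24$ vertices; and the icosahedron has, in its rotation group $A_{5}$ (whose vertex-stabiliser is $\mathbb{Z}_{5}$), a subgroup of order $12$ isomorphic to $A_{4}$ meeting that stabiliser trivially, hence transitive on the $12$ vertices --- so all of these are in fact soluble. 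The dodecahedron has no transitive soluble group of isometries, so there I would embed its $20$ vertices into a product of regular polygons, using that its golden-ratio coordinates are algebraic expressions in $\cos(2\pi/5)$ and $\cos(\pi/5)$ and so can be produced from vertices of regular $10$-gons; once more the delicate point is exactness of the grid placement. The $120$-cell and $600$-cell are left aside: their common rotation group $H_{4}$ has $A_{5}$ as a composition factor, hence is not soluble, and their vertex sets (the $120$ unit icosians and their duals) do not seem to reduce to any of the cases above.

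For the isosceles trapezia, take a representative $A=(-p,0),\ B=(p,0),\ C=(q,h),\ D=(-q,h)$; it has a reflection symmetry across the $y$-axis and only four distinct pairwise distances (the parallel sides $AB$, $CD$; the legs $AD=BC$; the diagonals $AC=BD$). The plan is to place it in a product $R_M\times R_N$ of two regular polygons, with $A,B$ at vertices $\pm m$ and $C,D$ at vertices $\mp k$ of the first polygon, paired respectively with vertices $0$ and $1$ of the second. Then the reflection $x\mapsto-x$ of the first polygon automatically realises the symmetry and forces $|AC|=|BD|$, $|AD|=|BC|$, leaving four equations --- for $|AB|^{2}$, $|CD|^{2}$, the diagonal and the leg --- in the unknowns $m,k,M,N$ and the two circumradii. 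Letting $M$, and then $N$, be large (so each polygon is locally parabolic) makes this system approximately consistent and solvable, and the only genuine obstacle, exactly as in part~\ref{item:blocksets}, is to upgrade ``approximately'' to ``exactly'' --- which may require one or two additional polygon factors, or a choice of $M,N$ in the right number-theoretic relation to the given trapezium.
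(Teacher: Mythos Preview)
Your outline handles the easy cases correctly (polygons, simplices, cubes, cross-polytopes, $24$-cell, icosahedron --- your $A_4$-in-$A_5$ argument for the icosahedron is fine), but in each of the three substantive cases you stop precisely at the point that needs an idea, and you say so yourself: ``constructing the encoding \ldots\ is the step I expect to be hard'', ``the delicate point is exactness of the grid placement'', ``the only genuine obstacle \ldots\ is to upgrade `approximately' to `exactly'\,''. None of these upgrades is supplied, so what you have is a plan, not a proof. Worse, the ``products of regular polygons with approximate parameters made exact'' paradigm is exactly where the cyclic-quadrilateral obstruction of Leader--Russell--Walters bites, so there is no reason to expect it to close.

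The paper does not go this route at all. The engine is the soluble $2$-transitive group $AGL(1,p)$ on $\mathbb{Z}_p$ for $p$ prime. For part~\ref{item:blocksets} one first symmetrises by signs: embed $X$ in the set $X'$ of all permutations of $(\pm\alpha,\ldots,\pm\alpha,\pm\beta,\pm\gamma)$ with $p-2$ copies of $\pm\alpha$ (via the translation $x\mapsto x-\tfrac{\alpha+\beta}{2}$, which turns the $\alpha/\beta$ alternative into a choice of sign), and then $AGL(1,p)\times C_2^{\,p}$ acts transitively on $X'$ --- the $C_2^{\,p}$ handles all signs, and $2$-transitivity of $AGL(1,p)$ is exactly enough to move the two distinguished positions (those of $\beta$ and $\gamma$) anywhere. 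No cross-term bookkeeping or grid exactness is needed. For the dodecahedron the same mechanism is abstracted: with $G=T_h\cong A_4\times C_2$ acting on $X$ with two orbits $O_1$ (the inscribed cube) and $O_2$, one takes $Y\subset X^{q}$ to be the tuples with exactly two coordinates in $O_1$, and $AGL(1,q)\times G^{q}$ acts transitively on $Y$; a diagonal using coset representatives of $G$ in $\mathrm{Aut}(X)$ places a scaled copy of $X$ inside $Y$. The numerical condition $\tfrac{|\mathrm{Aut}(X)||O_1|}{|G||X|}\le 2$ is what makes $2$-transitivity suffice, and it holds with equality here.

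For the isosceles trapezium the paper again avoids any approximation-to-exactness step. On the circumscribed circle, if $\angle AOB=2\pi/k$ then a regular $k$-gon through $A,B$ together with its reflection in the perpendicular bisector of $AD$ (which passes through $C,D$) is a single set on which the dihedral group $D_{2k}$ acts transitively. If $\angle AOB$ is not of this form, slide $B,C$ along their altitudes towards $AD$ until it is (the angle tends to $0$, so by continuity it hits some $2\pi/k$); call the result $AB'C'D$, embed that as above, and then take the product with a two-point set $\{-x,x\}$ with $x$ chosen so that $(2x)^2+|AB'|^2=|AB|^2$, recovering an isometric copy of $ABCD$ inside a set with transitive soluble symmetry group $D_{2k}\times C_2$. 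This is the twisted-prism idea you allude to, but applied after a one-parameter deformation rather than by solving a Diophantine system in polygon radii.
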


In case \eqref{item:blocksets} we can also prove a more general result. A proof of the next two open cases of the Block Sets Conjecture would show that the set of permutations of the coordinates $(\underbrace{\alpha,\ldots,\alpha}_{i},\underbrace{\beta,\ldots,\beta}_{j},\gamma,\gamma)$ and the set of permutations of $(\underbrace{\alpha,\ldots,\alpha}_{i},\underbrace{\beta,\ldots,\beta}_{j},\gamma,\delta)$ are both Ramsey for all $\alpha, \beta, \gamma,\delta \in \mathbb{R}$, $i,j \ge 0$. These sets were not previously known to be Ramsey, but we are able to prove that they are in fact subsoluble.

\begin{theorem}\label{thm:blocksets subsoluble}
      Let $\alpha,\beta,\gamma,\delta \in \mathbb{R}$ (not necessarily distinct), and let $i,j \ge 0$. The following sets are subsoluble and therefore Ramsey:
    \begin{align*}\{\text{permutations of }(\underbrace{\alpha,\ldots,\alpha}_{i},\underbrace{\beta,\ldots,\beta}_{j})\},~~
    &\{\text{permutations of }(\underbrace{\alpha,\ldots,\alpha}_{i},\underbrace{\beta,\ldots,\beta}_{j},\gamma)\},\\
    \{\text{permutations of }\underbrace{\alpha,\ldots,\alpha}_{i},\underbrace{\beta,\ldots,\beta}_{j},\gamma,\gamma)\}, ~~
    &\{\text{permutations of }(\underbrace{\alpha,\ldots,\alpha}_{i},\underbrace{\beta,\ldots,\beta}_{j},\gamma,\delta)\}.\end{align*}
\end{theorem}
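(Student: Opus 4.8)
The plan is to realise each of the four sets---after first replacing it by a higher-dimensional isometric copy---as a subset of a single orbit of a metabelian group of isometries. Throughout, think of a point of one of our sets as consisting of a \emph{bulk} ($i$ coordinates equal to $\alpha$ and $j$ equal to $\beta$) together with $t$ \emph{markers}, where $t\in\{0,1,2\}$ according to the case (no markers; one $\gamma$; two $\gamma$'s; or a $\gamma$ and a $\delta$).

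I would begin with a reduction on the ambient dimension. Since an isometric copy of a subset of a subsoluble set is again subsoluble, and since appending to every vector the same extra constant coordinates is an isometric embedding, the set of permutations of a bulk $(\alpha^{(i)},\beta^{(j)})$ together with given markers in $\mathbb{R}^{i+j+t}$ embeds isometrically into the corresponding set of permutations of $(\alpha^{(i')},\beta^{(j)})$ with the same markers in $\mathbb{R}^{N}$, for any $N\ge i+j+t$ and $i'=N-j-t$. Choosing $N=q$ to be the smallest prime power that is at least $i+j+t$, it therefore suffices to prove each of the four statements in the case where the total length $q$ is a prime power. Now index the coordinates of $\mathbb{R}^q$ by the field $\mathbb{F}_q$ and let $G=C_2^{\,q}\rtimes\mathrm{AGL}(1,q)\le\mathrm{Isom}(\mathbb{R}^q)$, where the $m$-th generator of $C_2^{\,q}$ is the reflection sending $x_m\mapsto(\alpha+\beta)-x_m$ and fixing all other coordinates, and $\mathrm{AGL}(1,q)$ acts by permuting the coordinates via its natural action on $\mathbb{F}_q$. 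The group $\mathrm{AGL}(1,q)$ normalises $C_2^{\,q}$ (conjugation permutes the $q$ reflections) and is metabelian (translations, then the cyclic multiplicative group), so $G$---a subgroup of the wreath product $C_2\wr\mathrm{AGL}(1,q)$---is soluble; crucially, $\mathrm{AGL}(1,q)$ acts $2$-transitively on $\mathbb{F}_q$.

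For the construction, in each case let $p_0\in\mathbb{R}^q$ be the vector with the marker(s) in coordinate positions $0$ (and $1$, when $t=2$) and with $i$ copies of $\alpha$ and $j$ copies of $\beta$, in any fixed order, in the remaining positions (when $t=0$ this simply means $p_0\in\{\alpha,\beta\}^q$). Let $Y$ be the $G$-orbit of $p_0$; then $Y$ is soluble, being a finite set on which the soluble group $G$ acts transitively. The point to check is that the (padded) permutation set $X$ is contained in $Y$: given $p\in X$, $2$-transitivity supplies an element $\pi\in\mathrm{AGL}(1,q)\le G$ carrying the marked positions of $p_0$ onto the correspondingly marked positions of $p$; then $\pi p_0$ and $p$ agree on the marker positions, while on every remaining position each of $\pi p_0$ and $p$ takes a value in $\{\alpha,\beta\}$, so $p$ is obtained from $\pi p_0$ by flipping the subset of non-marker positions where they disagree, which is an element of $C_2^{\,q}$ supported off the marker positions. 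Hence $X$ is subsoluble, and therefore Ramsey by Theorem~\ref{thm:Kriz}.

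I expect the genuine obstacle to be the reduction to prime-power length, together with recognising that it is needed: for a general length $n$ there need not be any soluble subgroup of $S_n$ that is transitive even on unordered pairs of coordinate positions, so the orbit argument cannot be run directly, and padding up to a prime power---where $\mathrm{AGL}(1,q)$ becomes available---is what rescues it. Everything else is routine: that $G$ is soluble, that $\mathrm{AGL}(1,q)$ is $2$-transitive, that appending constant coordinates is an isometry, and the orbit computation. Degenerate situations---$i=0$, $j=0$, $q$ very small, or a marker value equal to $\alpha$, $\beta$, or the other marker---either go through the argument above unchanged or collapse to a case with fewer markers, so they require no separate treatment.
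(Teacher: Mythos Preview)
Your proposal is correct and follows essentially the same approach as the paper: pad the length to a prime (the paper) or prime power (you), then act by the soluble group $C_2^{\,q}\rtimes\mathrm{AGL}(1,q)$, using $2$-transitivity of $\mathrm{AGL}(1,q)$ to place the at most two markers and the coordinate reflections swapping $\alpha\leftrightarrow\beta$ to fix the bulk. Your reflection $x_m\mapsto(\alpha+\beta)-x_m$ is exactly the paper's sign change $x_m\mapsto -x_m$ conjugated by the translation by $\tfrac{\alpha+\beta}{2}$ in every coordinate, which the paper carries out explicitly before invoking its lemma.
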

Unfortunately these results do not go backwards to prove the corresponding cases of the Block Sets Conjecture.

\subsection{Paper Outline}

We begin with some basic preliminaries in Section~\ref{sec:prelim}. We then prove Theorem~\ref{thm:blocksets subsoluble} in Section~\ref{sec:blocksets}.

Section~\ref{sec:polytopes} contains the proof that nearly all regular polytopes are subsoluble, bar two exceptions. It is easy to check that regular $k$-gons and $d$-dimensional regular simplices, cubes and orthoplexes are soluble (acted on by $C_k$, $C_d$, $C_2^d$ and $C_2\times C_d$ respectively). The non-trivial cases are the icosahedron and dodecahedron  with the dodecahedron being the most difficult.

We give the construction of a soluble set containing an arbitrary isosceles trapezium in Section~\ref{sec:trapezia}.
Finally, we finish with several open questions in Section~\ref{sec:openqs}.

\section{Preliminaries}\label{sec:prelim}
\subsection{Basic Group Properties}

Let $C_n$ be the cyclic group of order $n$. For general $n$ we will denote the elements of this group by $g^i, 0\le i \le {n-1}$, but for ease we will usually let the elements of $C_2$ be $\{1,-1\}$.

For a prime $p$, let $\mathbb{Z}_p$ be the field of order $p$. The affine group $AGL(1,p)$ contains all maps of the form
\begin{align*}
    \phi: \mathbb{Z}_p &\rightarrow \mathbb{Z}_p \\
    a &\mapsto ta + s
\end{align*} 
where $t \in \mathbb{Z}_p\setminus\{0\}$ and $s \in \mathbb{Z}_p$. Note that as a group $AGL(1,p)$ is isomorphic to the semi-direct product $C_{p-1} \rtimes C_p$ and since the semi-direct product of soluble groups is soluble, $AGL(1,p)$ is soluble.

\subsection{Direct products are subsoluble}\label{sec:directprod}

The following simple lemma will be useful later.
\begin{lemma}\label{lem:direct product}
    If $X,Y$ are subsoluble then so is the direct product $X \times Y$.
\end{lemma}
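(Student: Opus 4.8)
The plan is to reduce the statement to the special case in which $X$ and $Y$ are themselves \emph{soluble}, and then to exhibit an explicit soluble set containing $X\times Y$: namely, the Euclidean direct product of the two ambient soluble sets, acted on by the direct product of the two soluble groups.

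In detail, I would proceed as follows. First, unwind the definitions: since $X$ is subsoluble there is a soluble set $X'$, say $X'\subset\mathbb{R}^{m}$, containing an isometric copy of $X$, together with a soluble group $G$ acting transitively on $X'$ by isometries; similarly fix a soluble set $Y'\subset\mathbb{R}^{n}$ containing a copy of $Y$, with a soluble group $H$ acting transitively on $Y'$. (As $X'$ and $Y'$ are finite we may regard $G$ and $H$ as groups of isometries of the ambient spaces, passing to affine hulls if convenient.) Second, set $Z:=X'\times Y'\subset\mathbb{R}^{m}\times\mathbb{R}^{n}=\mathbb{R}^{m+n}$ with its Euclidean metric; this metric restricts on $Z$ to the $\ell^2$-combination of the metrics of $X'$ and $Y'$, so $Z$ is the Euclidean direct product in the intended sense. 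Third, let $G\times H$ act on $\mathbb{R}^{m+n}$ coordinatewise by $(g,h)\cdot(u,v)=(gu,hv)$; this is an action by isometries because $\|gu-gu'\|^{2}+\|hv-hv'\|^{2}=\|u-u'\|^{2}+\|v-v'\|^{2}$, it preserves $Z$, and it is transitive on $Z$ (given $(x_1,y_1),(x_2,y_2)\in Z$, pick $g\in G$ with $gx_1=x_2$ and $h\in H$ with $hy_1=y_2$). Fourth, a direct product of soluble groups is soluble, so $G\times H$ is soluble and hence $Z$ is a soluble set. Finally, if $\iota\colon X\hookrightarrow X'$ and $\kappa\colon Y\hookrightarrow Y'$ are isometric embeddings, then $(x,y)\mapsto(\iota x,\kappa y)$ is an isometric embedding of $X\times Y$ into $Z$, since squared distances add over the two factors identically on both sides; thus $X\times Y$ is subsoluble.

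I do not expect a genuine obstacle: the content is entirely the product construction above. The only points needing a little care are the metric bookkeeping — checking that the Euclidean metric on $\mathbb{R}^{m+n}$ is exactly the one realising $X'\times Y'$ as the intended direct product, and that the coordinatewise $G\times H$-action is by isometries — together with the routine observation that the transitive isometric actions on the finite sets $X'$ and $Y'$ may be taken to live in their ambient Euclidean spaces; both follow at once from the Pythagorean identity. This lemma will then be used later to split a target set into a direct product of smaller soluble pieces.
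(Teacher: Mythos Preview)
Your proposal is correct and follows essentially the same approach as the paper: pass to soluble supersets $X'\supseteq X$ and $Y'\supseteq Y$ with transitive soluble actions by $G$ and $H$, and observe that the direct product $G\times H$ is soluble and acts transitively on $X'\times Y'$, which contains a copy of $X\times Y$. Your write-up includes more explicit metric bookkeeping than the paper's, but the argument is the same.
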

\begin{proof}
As $X$ is subsoluble, there exists a finite set $X'$ containing $X$ with soluble transitive group action $G$. Similarly, there exists a finite set $Y'$ containing $Y$ with soluble transitive group action $H$. Clearly $X' \times Y'$ is a finite set containing $X \times Y$ and $G \times H$ is soluble. Moreover, $G \times H$ acts transitively on $X' \times Y'$ via the natural action $(g,h)(x,y) = (g(x), h(y))$ for $g\in G$, $h \in H$, $x \in X'$ and $y \in Y'$.
\end{proof}

Note that as an immediate corollary we have that rectangular parallelepipeds are subsoluble.

\section{Known Ramsey sets arising from block sets are subsoluble}\label{sec:blocksets}

In this section we prove Theorem~\ref{thm:blocksets subsoluble}. The following more general lemma is the key result for the proof. 

\begin{lemma}\label{lem:blockset subsoluble}
    Let $\alpha,\beta,\gamma \in \mathbb{R}$ (not necessarily distinct), let $\ell \ge 0$ and let $X\subset \mathbb{R}^{\ell+2}$  be the set of all permutations of coordinates \[(\underbrace{\pm\alpha,\ldots,\pm\alpha}_{\ell},\pm \beta, \pm \gamma),\] where $\pm x$ means take both the option with $x$ and with $-x$. Then $X$ is subsoluble.
\end{lemma}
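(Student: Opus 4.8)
The plan is to exploit the fact that we do not need a soluble group acting transitively on $X$ itself. The full group of signed permutations of $\mathbb{R}^{2+\ell}$ (the hyperoctahedral group $C_2\wr S_{2+\ell}$) already acts transitively on $X$, but it fails to be soluble once $2+\ell\ge 5$. However, to carry one signed permutation of $(\alpha^\ell,\beta,\gamma)$ to another we only ever need to reposition the \emph{two} non-$\alpha$ coordinates $\pm\beta$ and $\pm\gamma$ and then adjust the $2+\ell$ signs --- and controlling two positions is exactly a $2$-transitive group's job, for which there is a soluble example, namely the affine group. So the plan is: (i) pad $X$ by appending extra copies of the constant $\alpha$ until the ambient dimension is a prime $p$; (ii) exhibit a soluble group acting transitively on the full set $X_p'$ of all signed permutations of $(\alpha^{\,p-2},\beta,\gamma)$ in $\mathbb{R}^p$; (iii) observe that the padding in (i) is an isometric embedding, so that $X$ sits isometrically inside the soluble set $X_p'$, which is what subsolubility asks for.

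For step (ii) I would fix a prime $p$ and take $G:=C_2^{\,p}\rtimes AGL(1,p)$, realised concretely inside $O(p)$ as the group of signed $p\times p$ permutation matrices whose underlying coordinate permutation lies in $AGL(1,p)\le S_p$ (with $AGL(1,p)$ acting on the index set $\mathbb{Z}_p$ by $x\mapsto tx+s$). Here $C_2^{\,p}$ is an abelian normal subgroup with quotient $AGL(1,p)$, which is soluble by Section~\ref{sec:prelim}, so $G$ is soluble, and it acts on $\mathbb{R}^p$ by isometries. To see $G$ is transitive on $X_p'$: given $v\in X_p'$, its $\pm\beta$-entry occupies some coordinate $b$ and its $\pm\gamma$-entry some coordinate $c\ne b$; by the sharp $2$-transitivity of $AGL(1,p)$ on $\mathbb{Z}_p$ there is $\sigma\in AGL(1,p)$ with $\sigma(b)=0$ and $\sigma(c)=1$, and after applying $\sigma$ every remaining entry is $\pm\alpha$, so a suitable element of $C_2^{\,p}$ corrects all $p$ signs simultaneously and carries $v$ to the fixed reference point $(\beta,\gamma,\alpha,\dots,\alpha)$. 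Hence $X_p'$ is a single $G$-orbit, so it is soluble. (If $\alpha,\beta,\gamma$ coincide or vanish, $X_p'$ is merely a smaller $G$-invariant set and the same argument applies with the appropriate weakening --- $1$-transitivity, or $2$-transitivity on unordered pairs --- so no degenerate case causes trouble.)

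For steps (i) and (iii): since there are primes above every bound, choose a prime $p\ge 2+\ell$ and define $\iota\colon\mathbb{R}^{2+\ell}\to\mathbb{R}^p$ by $\iota(x)=(x,\alpha,\dots,\alpha)$ with $p-2-\ell$ copies of $\alpha$ appended. Appending one fixed vector to every point changes no pairwise distance, so $\iota$ is an isometric embedding; and appending $p-2-\ell$ copies of $\alpha$ to a signed permutation of $(\alpha^\ell,\beta,\gamma)$ yields a signed permutation of $(\alpha^{\,p-2},\beta,\gamma)$, so $\iota(X)\subseteq X_p'$. Therefore $X$ is isometric to a subset of the soluble set $X_p'$, i.e. $X$ is subsoluble.

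The one genuinely load-bearing point is the transitivity argument in step (ii): one must notice that sharp $2$-transitivity is \emph{precisely} the amount of symmetry the situation demands --- because only two coordinates are distinguished --- and that $AGL(1,p)$, being soluble, supplies it. This is also what pins the hypothesis to exactly two special values $\beta,\gamma$: with three distinct special values one would instead need a $3$-transitive group on $p$ points, and no group of this shape is soluble. Everything else (solubility of $C_2^{\,p}\rtimes AGL(1,p)$, that appending a constant is an isometry, that the padded vector lies in $X_p'$, and the existence of a large prime) is routine.
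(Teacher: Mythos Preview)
Your proposal is correct and follows essentially the same approach as the paper: pad with extra $\alpha$-coordinates up to a prime dimension $p$, then let the soluble group built from $C_2^{\,p}$ and $AGL(1,p)$ act on the enlarged set, using the $2$-transitivity of $AGL(1,p)$ to place the $\pm\beta$ and $\pm\gamma$ entries and $C_2^{\,p}$ to correct the signs. The only cosmetic difference is that you (correctly) present the group as the semidirect product $C_2^{\,p}\rtimes AGL(1,p)$, whereas the paper writes it as $AGL(1,p)\times C_2^{\,p}$; your framing also handles the degenerate cases explicitly, which the paper leaves implicit.
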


Before we prove the lemma we will show how to use it to obtain the theorem.
\begin{proof}[Proof of Theorem~\ref{thm:blocksets subsoluble}]
     Let $\alpha,\beta,\gamma,\delta \in \mathbb{R}$ (not necessarily distinct), and let $i,j,k,\ell \ge 0$. 
      Let $X(i,j,k,\ell) \subset \mathbb{R}^{i+j+k+\ell}$ be the set containing permutations of the coordinates \[(\underbrace{\alpha,\ldots,\alpha}_{i},\underbrace{\beta,\ldots,\beta}_{j},\underbrace{\gamma,\ldots,\gamma}_{k},\underbrace{\delta,\ldots,\delta}_{\ell}).\] 
If $k = \ell = 1$, we will show that there is a copy of $X(i,j,1,1)$ in a set described in the statement of Lemma~\ref{lem:blockset subsoluble} with appropriate choices for the variables. 
Consider the set $Y$ of all permutations of the coordinates
\[\left(\underbrace{\pm \frac{\alpha - \beta}{2},\ldots ,\pm \frac{\alpha - \beta}{2}}_{i+j}, \pm\left(\gamma - \frac{\alpha + \beta}{2}\right), \pm\left(\delta - \frac{\alpha + \beta}{2}\right)\right).\]
Consider the subset $X \subset Y$ containing all permutations of the coordinates
\[\left(\underbrace{\frac{\alpha - \beta}{2},\ldots ,\frac{\alpha - \beta}{2}}_{i}, \underbrace{ \frac{\beta - \alpha}{2},\ldots , \frac{\beta - \alpha}{2}}_{j}, \gamma - \frac{\alpha + \beta}{2}, \delta - \frac{\alpha + \beta}{2}\right).\]
If we translate this set by adding $\left(\frac{\alpha + \beta}{2},\frac{\alpha + \beta}{2},\ldots,\frac{\alpha + \beta}{2}\right)$ to all elements,
we get all permutations of $(\underbrace{\alpha,\ldots,\alpha}_{i},\underbrace{\beta,\ldots,\beta}_{j},\gamma,\delta)$, which is exactly $X(i,j,1,1)$. Thus there is a copy of $X(i,j,1,1)$ in $Y$ and so by Lemma~\ref{lem:blockset subsoluble}, $X(i,j,1,1)$ is subsoluble.

The case where $k=0$ and $\ell=2$ or vice versa follows immediately from the previous case by setting $\gamma = \delta$.

Let $i' \ge i,j'\ge j, k'\ge k$ and $\ell'\ge \ell$. It is not hard to see that within $X(i',j',k',\ell')$ there is a copy of $X(i,j,k,\ell)$: take all the elements of $X(i',j',k',\ell')$ with initial coordinates $\underbrace{\alpha,\ldots,\alpha}_{i'-i},\underbrace{\beta,\ldots,\beta}_{j'-j},\underbrace{\gamma,\ldots,\gamma}_{k'-k},\underbrace{\delta,\ldots,\delta}_{\ell'-\ell}$ in exactly that order. Therefore if $X(i',j',k',\ell')$ is subsoluble then so is $X(i,j,k,\ell)$. This completes the proof.
\end{proof}

We will now prove the lemma.
\begin{proof}[Proof of Lemma~\ref{lem:blockset subsoluble}]
    Let $p$ be a prime larger than $2+\ell$, and let $X'$ be all permutations of the coordinates $(\underbrace{\pm\alpha,\ldots,\pm\alpha}_{p-2},\pm \beta, \pm \gamma)$. Clearly $X'$ contains a copy of $X$: take all elements of $X'$ where the initial $p - \ell - 2$ entries are $\alpha$.
    Let $G$ be the wreath product $C_2 \wr AGL(1,p)$; that is, $G = C_2^p \rtimes AGL(1,p)$, where $(i_1,\ldots,i_p;\phi)\cdot (j_1,\ldots,j_p;\psi) = (i_1j_{\phi^{-1}(1)},\ldots,i_pj_{\phi^{-1}(p)};\phi\psi)$.  Note that $G$ is a soluble group as $AGL(1,p) $ and $ C_2$ are both soluble.
    
    Let $\{1,-1\}$ be the elements of $C_2$ and define an action of $G$ on $X'$ where for $(i_1,\ldots,i_p;\phi) \in  G'$ we have
    \[(i_1,\ldots,i_p;\phi):(x_1,x_2,\ldots,x_p) \mapsto (i_1x_{\phi^{-1}(1)}, i_2x_{\phi^{-1}(2)},\ldots, i_px_{\phi^{-1}(p)}).  \]
    Clearly if $(x_1,x_2,\ldots,x_p) \in X'$ then so is $(i_1x_{\phi(1)}, i_2x_{\phi(2)},\ldots, i_px_{\phi(p)})$, so this is well-defined. Moreover, \begin{align*}
    (i_1,\ldots,i_p;\phi)(j_1,\ldots,j_p;\psi)(x_1,x_2,\ldots,x_p) &= (i_1,\ldots,i_p;\phi) (j_1x_{\psi^{-1}(1)}, j_2x_{\psi^{-1}(2)},\ldots, j_px_{\psi^{-1}(p)}) \\ &= 
    (i_1j_{\phi^{-1}(1)}x_{\psi^{-1}\phi^{-1}(1)},\ldots,i_pj_{\phi^{-1}(p)}x_{\psi^{-1}\phi^{-1}(p)})
    \\ &= (i_1j_{\phi^{-1}(1)},\ldots,i_pj_{\phi^{-1}(p)};\phi\psi)(x_1,x_2,\ldots,x_p) 
    \end{align*}
    and so this is a group action.
    
    All that remains is to show that the action is transitive.
    We will show that $(\beta, \gamma,\underbrace{\alpha,\ldots,\alpha}_{p-2})$ can be transformed under the action of $G$ to any $(x_1,x_2,\ldots,x_p) \in X'$ and vice versa.
    Let $t \ne s$ be such that $x_t \in \{\beta,-\beta\}$ and $x_s = \{\gamma,-\gamma\}$. Let $\phi:\mathbb{Z}_p \rightarrow \mathbb{Z}_p$ send $a \mapsto (s-t)a + (2t-s)$ so that $\phi(1) = t$ and $\phi(2) = s$. Then for $1 \le j \le p$, pick $i_j \in \{0,1\}$ such that $i_t\beta = x_t$, $i_s\gamma = x_s$ and for $j \ne s,t$, $i_j\alpha = x_j$.
    Taking $h = (i_1,i_2,\ldots,i_p;\phi)$ gives
    \begin{align*}
    h((\beta, \gamma,\underbrace{\alpha,\ldots,\alpha}_{p-2})) &= (i_1\alpha,i_2\alpha,\ldots,i_t\beta, \ldots, i_s\gamma, \ldots, i_p\alpha) \\ &= (x_1,x_2,\ldots,x_p).
    \end{align*}
    Clearly $h^{-1}(x_1,x_2,\ldots,x_p) = (\beta, \gamma,\underbrace{\alpha,\ldots,\alpha}_{p-2})$. Thus the action is transitive, as required.
\end{proof}

\section{Nearly all regular polytopes are subsoluble}\label{sec:polytopes}
In this section we prove that all regular polytopes are subsoluble except for possibly two.
\begin{theorem}\label{thm:regular polytopes subsoluble}
    The vertex sets of all regular polytopes are subsoluble, except for possibly the $120$-cell and the $600$-cell. 
    In particular, the following regular polytopes have soluble symmetry groups:
    \begin{itemize}[topsep = 3pt, itemsep=0pt]
        \item In $2$ dimensions: all regular polygons,
        \item In $3$ dimensions: the tetrahedron, cube and octahedron,
        \item In $4$ dimensions: the 8-cell (a.k.a.~4-cube), the 16-cell (a.k.a.~4-orthoplex) and the 24-cell;
    \end{itemize}
    there is a transitive soluble group action on the vertex sets of the following remaining regular polytopes:
    \begin{itemize}[topsep = 3pt, itemsep=0pt]
        \item In $3$ dimensions: the icosahedron,
        \item In $4$ dimensions: the 5-cell (a.k.a.~regular 4-simplex),
        \item In $d \ge 5$ dimensions: the regular $d$-simplex, the $d$-cube and the $d$-orthoplex;
    \end{itemize}
    and finally, there is a soluble set $Y$ containing a copy of the vertex set of the dodecahedron.
\end{theorem}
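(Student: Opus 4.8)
The plan is to handle the regular polytopes in three tiers. The routine cases are the regular $k$-gon, the $d$-simplex, the $d$-cube and the $d$-orthoplex. A regular $k$-gon has the cyclic rotation group $C_k$ acting transitively (and its full symmetry group, the dihedral group of order $2k$, is soluble). Realising the $d$-simplex as the standard basis vectors $e_1,\dots,e_{d+1}\in\mathbb{R}^{d+1}$, the coordinate-cycling group $C_{d+1}$ is transitive; centring the $d$-cube at the origin with vertices $(\pm1,\dots,\pm1)$, the sign-change group $C_2^d$ is transitive; for the $d$-orthoplex with vertices $\pm e_1,\dots,\pm e_d$, the abelian group $C_d\times C_2$ (coordinate cycling together with global negation) is transitive. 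Each of these groups is abelian, hence soluble. When $d\le 4$ the full symmetry group in each family is already soluble because $S_m$ is soluble for $m\le4$; and the $24$-cell has symmetry group of order $1152=2^7\cdot3^2$, which is soluble by Burnside's $p^aq^b$ theorem. This settles everything except the icosahedron and the dodecahedron.

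For the icosahedron, realise it with vertices the twelve points obtained from $(0,\pm1,\pm\varphi)$ by cyclic permutations of coordinates together with all sign choices, where $\varphi$ is the golden ratio. Inside the rotation group, the cyclic coordinate permutation together with the Klein four-group $V=\{(\varepsilon_1,\varepsilon_2,\varepsilon_3):\varepsilon_i=\pm1,\ \varepsilon_1\varepsilon_2\varepsilon_3=1\}$ of even sign changes generate a copy of $A_4$. Since a zero coordinate absorbs a sign flip, $V$ already produces all four of $(0,\pm1,\pm\varphi)$ from a single such vertex, and the $3$-cycle then sweeps out all twelve vertices; so this $A_4$ is transitive, and $A_4$ is soluble. (Abstractly: the rotation group $A_5$ has vertex stabiliser $C_5$, and $A_4\cap C_5=1$, so every $A_4\le A_5$ is already transitive on the twelve vertices.)

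The dodecahedron is the one hard case, and the main obstacle, because it is genuinely not soluble: its symmetry group is $A_5\times C_2$, any subgroup transitive on the $20$ vertices has order divisible by $20$, but a soluble subgroup projects to a proper — hence order-$\le12$ — subgroup of $A_5$, so has order $\le24$, and the only candidate of order $20$ (a dihedral group of order $20$, realised as the stabiliser of a $5$-fold axis) is readily checked to have two orbits of size $10$. So one must exhibit a soluble set $Y$ in higher dimensions containing an isometric copy of the dodecahedron, necessarily with $|Y|>20$ and with the dodecahedron not a union of $G$-orbits. My approach would start from the decomposition of the $20$ vertices into two coaxial regular pentagonal antiprisms — the ten vertices of a pair of opposite faces and the ten ``equatorial'' vertices — which are exactly the two orbits of the above dihedral group of order $20$, are isomorphic as sets with that dihedral action, and are each soluble on their own (a pentagonal antiprism has soluble symmetry group of order $20$). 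The crux is to glue these two ``copies'' of the same abstract transitive set into one transitive soluble set: this is impossible inside $\mathbb{R}^3$, since the two antiprisms are congruent as sets with the dihedral action but not as metric spaces, so no isometry of $\mathbb{R}^3$ interchanges them, which forces the construction to add both new coordinates and new points, and all the work is in arranging this so that the dodecahedron's metric is reproduced exactly. A second decomposition worth trying is the dodecahedron as a cube together with three mutually orthogonal golden rectangles — each piece soluble — combined with Lemma~\ref{lem:direct product}. Either way, this gluing step is where essentially all the difficulty lies, which is why the dodecahedron is the most delicate of the regular polytopes.
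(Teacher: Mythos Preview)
Your handling of the routine cases (polygons, simplices, cubes, orthoplexes, the $24$-cell via Burnside, and the icosahedron via $A_4$) is correct and matches the paper's approach, with only cosmetic differences. The genuine gap is the dodecahedron: you correctly argue that no soluble subgroup of $A_5\times C_2$ is transitive on the $20$ vertices, and you identify two plausible orbit decompositions to start from, but you do not complete either construction. Saying ``all the work is in arranging this'' and ``this gluing step is where essentially all the difficulty lies'' is an honest admission that the proof is unfinished, not a proof.

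The paper resolves this with a mechanism you have not described. Your second decomposition --- inscribed cube $Q$ plus the remaining $12$ vertices --- is indeed the right one: the pyritohedral group $T_h\cong A_4\times C_2$ (the stabiliser of $Q$ inside $\mathrm{Aut}(X)$) is soluble and has exactly those two orbits. But the gluing is not done by Lemma~\ref{lem:direct product} or by ad hoc coordinate extensions. Instead, the paper proves a general Lemma~\ref{lem:2-orbit-plus-stuff implies soluble}: if a soluble $G\le H=\mathrm{Aut}(X)$ has two orbits $O_1,O_2$ with $|H||O_1|/(|G||X|)\le 2$, then for a prime $q>|H|/|G|$ one embeds $X$ diagonally into $X^q$ via coset representatives $f_1,\dots,f_s$ of $G$ in $H$, lands inside the set $Y\subseteq X^q$ of tuples with exactly $|H||O_1|/(|G||X|)$ coordinates in $O_1$, and acts on $Y$ by the soluble group $AGL(1,q)\times G^q$. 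The $2$-transitivity of $AGL(1,q)$ on $\mathbb{Z}_q$ is exactly what makes this transitive. For the dodecahedron one checks $|H||Q|/(|T_h||X|)=120\cdot 8/(24\cdot 20)=2$, so the lemma applies. Neither of your proposed glues (matching two antiprisms, or invoking direct products on the cube-plus-rectangles picture) supplies this; the $AGL(1,q)$ trick is the missing idea.
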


The hardest case of this theorem is the dodecahedron and the following lemma is the key lemma required for settling this case.

\begin{lemma}\label{lem:2-orbit-plus-stuff implies soluble}
    Let $X\subseteq \mathbb{R}^d$ be a transitive set and let $H$ be a group that acts transitively on $X$. Suppose that $G$ is a soluble subgroup of $H$ such that the action of $G$ on $X$ has two orbits $O_1$ and $O_2$, and \[\frac{|H||O_1|}{|G||X|} \le 2.\]
    Then letting $q$ be any prime greater than $|H|/|G|$, there is a soluble set $Y\subseteq \mathbb{R}^{q d}$ containing a copy of $X$.
\end{lemma}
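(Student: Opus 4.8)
The plan is to build $Y$ by ``unfolding'' the two $G$\nobreakdash-orbits of $X$ into a single orbit spread across $q$ coordinate blocks. Since $X$ is transitive it is spherical, so after translating we may assume its circumcentre is the origin and every isometry of $X$ extends to an orthogonal map of $\mathbb{R}^d$; thus $G \le H \le O(d)$. Write $q' = |H|/|G| = [H:G]$, and fix a basepoint $x_1 \in O_1$ with point stabilisers $G_{x_1} \le H_{x_1}$. A first step is to translate the numerical hypothesis: by the orbit--stabiliser theorem, $\frac{|H||O_1|}{|G||X|} = \frac{|H_{x_1}|}{|G_{x_1}|} = [H_{x_1}:G_{x_1}]$, so the hypothesis says exactly that $[H_{x_1}:G_{x_1}] \le 2$ (equivalently $|O_1| \le \tfrac{2}{q'}|X|$), i.e. $G H_{x_1}$ is at most two left $G$\nobreakdash-cosets.

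Next I would set up the ambient soluble group and the embedding. Let $P \le S_q$ be a transitive soluble permutation group on $\{1,\dots,q\}$; the natural choice is the sharply $2$\nobreakdash-transitive group $AGL(1,q)$, which is legitimate precisely because $q$ is prime (the group $C_q$ may suffice in the easiest cases). Let $\Gamma = G^{q} \rtimes P$ act orthogonally on $\mathbb{R}^{qd} = (\mathbb{R}^d)^{\oplus q}$ by $(g_1,\dots,g_q;\pi)\cdot(w_1,\dots,w_q) = (g_1 w_{\pi^{-1}(1)},\dots,g_q w_{\pi^{-1}(q)})$; then $\Gamma$ is soluble, being an extension of the soluble group $G^q$ by the soluble group $P$. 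Fix a transversal $h_1,\dots,h_{q'}$ of $G$ in $H$ and, using $q > q'$, extend it to a list $h_1,\dots,h_q \in H$, and define $\iota \colon X \to \mathbb{R}^{qd}$ by $\iota(x) = \tfrac{1}{\sqrt q}\,(h_1 x, \dots, h_q x)$. Since each $h_i$ is orthogonal, $\|\iota(x)-\iota(y)\|^2 = \tfrac1q\sum_i\|h_i(x-y)\|^2 = \|x-y\|^2$, so $\iota$ is an isometric embedding and $\iota(X)$ is a genuine copy of $X$. Finally set $Y = \Gamma\cdot\iota(x_1)$: this is a finite set on which the soluble group $\Gamma$ acts transitively, hence a soluble set, and the whole lemma reduces to the single claim $\iota(X) \subseteq Y$.

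The heart of the argument, and the step I expect to be the main obstacle, is proving that claim. Writing $x \in X$ as $x = hx_1$ and comparing coordinates, $\iota(hx_1) \in \Gamma\cdot\iota(x_1)$ holds if and only if there are $\pi \in P$ and $g_i \in G$ with $g_i\,h_{\pi^{-1}(i)}\,x_1 = h_i h\,x_1$ for every $i$, equivalently $G \cap \big(h_i h\,H_{x_1}\,h_{\pi^{-1}(i)}^{-1}\big) \ne \emptyset$ for all $i$; this is a bipartite matching problem on the $q$ blocks whose edge relation depends on $h$, with the matching constrained to lie in $P$. This is where both hypotheses enter: the strict inequality $q > q'=[H:G]$ supplies strictly more blocks than cosets, giving the slack needed to realise a suitable matching inside the rigid group $P$ (here the $2$\nobreakdash-transitivity of $AGL(1,q)$ is what lets us steer the one ``exceptional'' block into place), while the bound $[H_{x_1}:G_{x_1}]\le 2$ guarantees the obstruction to the diagonal matching is a single $C_2$, which can be absorbed either by a careful choice of the padding elements $h_{q'+1},\dots,h_q$ and of the basepoint, or, if necessary, by enlarging $P$ by a commuting $C_2$ while keeping $\Gamma$ soluble. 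Carrying out this matching/coset bookkeeping is the technical core of the proof; once it is done, $\iota(X)\subseteq Y$ follows and we are finished.
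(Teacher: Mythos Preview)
Your framework is exactly the paper's: act on $(\mathbb{R}^d)^q$ by $\Gamma = G^{q}\rtimes AGL(1,q)$, embed $X$ via coset representatives, and take $Y$ to be a single $\Gamma$-orbit. You also correctly extract that $c:=|H|\,|O_1|/(|G|\,|X|)=[H_{x_1}:G_{x_1}]\in\{1,2\}$.

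The gap is in your embedding. Because $G$ preserves $O_1$ and $O_2$ and $P$ only permutes blocks, the number of coordinates lying in $O_1$ is a $\Gamma$-invariant; hence $\iota(X)\subseteq Y=\Gamma\cdot\iota(x_1)$ forces every $\iota(x)$ to have the \emph{same} $O_1$-count as $\iota(x_1)$. With your map $\iota(x)=q^{-1/2}(h_1x,\dots,h_qx)$ this fails. The first $q'$ coordinates (a transversal) contribute exactly $c$ entries in $O_1$ for every $x$, but each padding coordinate $h_ix$ lies in $O_1$ iff $h_{\tau(i)}x$ does, where $Gh_i=Gh_{\tau(i)}$ with $\tau(i)\le q'$. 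Thus the padding contribution is $\sum_{j\le q'} (m_j-1)\,[\,j\in S(x)\,]$, where $m_j$ is the multiplicity of the $j$th coset among $h_1,\dots,h_q$ and $S(x)=\{j\le q':h_jx\in O_1\}$. Already in the case $c=1$ one checks that $S(x)$ visits every singleton in $[q']$ as $x$ runs over $X$, so constancy would force all $m_j$ equal and hence $q'\mid q$, impossible for $q$ prime with $1<q'<q$. So $\iota(X)$ meets several $\Gamma$-orbits, your matching problem has no solution for general $h$, and neither tweaking the padding inside $H$ nor adjoining a commuting $C_2$ to $P$ can help: the obstruction is a numerical invariant, not a shortage of transitivity.

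The paper's remedy is to pad with a \emph{constant} point $z\in O_2$ rather than with $h_ix$: set $v(x)=(h_1x,\dots,h_{q'}x,z,\dots,z)$. This is still a scaled isometric copy of $X$ (by $\sqrt{q'}$), and now every $v(x)$ has exactly $c$ coordinates in $O_1$, by the double count $|\{j\le q':h_jx\in O_1\}|\cdot|G|=|\{h\in H:hx\in O_1\}|=|H|\,|O_1|/|X|$. Taking $Y$ to be all tuples in $X^q$ with exactly $c$ coordinates in $O_1$, the $2$-transitivity of $AGL(1,q)$ together with the transitivity of $G$ on each $O_i$ makes $\Gamma$ transitive on $Y$ (this is precisely where $c\le 2$ is used), and $\{v(x):x\in X\}\subseteq Y$. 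With this embedding the ``matching'' you set up becomes trivial: any $c$ positions can be sent to any $c$ positions.
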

Before we prove Lemma~\ref{lem:2-orbit-plus-stuff implies soluble} we will  deduce the theorem.


\begin{proof}[Proof of Theorem~\ref{thm:regular polytopes subsoluble}]

The symmetry groups of regular polytopes are all Coxeter groups, and are listed in Table~\ref{table:coxeter groups}. The group structures as listed in the table are taken from Table 2.1 of~\cite{wilson}. We use $\times$ to denote a direct product, $\rtimes$ to denotes a semi-direct product, $\wr$ to denote a wreath product and $C_2 \cdot$ to denote a double cover, a.k.a.
a non-split extension by $C_2$. For more explanation of these terms see e.g.~\cite{wilson}.

\begin{table}[h]
\begin{tabular}{lccl}\toprule
Group name & Group structure & soluble? & Regular Polytope(s)\\\midrule
$I_2(k)$ & $D_{2k}\cong C_n \rtimes C_2$ &Yes& $k$-gon\\
 $A_n$ & $S_{n+1}$ & Iff $n \le 3$ & $n$-dimensional regular simplex \\
$B_n$ & $C_2 \wr S_n$ & Iff $n \le 4$ & $n$-dimensional cube \\
&&& $n$-dimensional orthoplex\\
$H_3$ & $C_2 \times A_5$ &No& icosahedron \\
&&& dodecahedron\\
$F_4$ & $(C_2\cdot C_2^4)\rtimes(S_3 \times S_3)$ & Yes & $24$-cell\\
$H_4$ & $C_2 \cdot (A_5 \times A_5) \rtimes C_2$ &No & $600$-cell \\
&&& $120$-cell\\
\bottomrule
\end{tabular}
\caption{The symmetry groups of regular polytopes {(group structure from~\cite[Table 2.1]{wilson}})}
\label{table:coxeter groups}
\end{table}

Recall that the permutation group $S_n$ and the alternating group $A_n$ are soluble if and only if $n \le 4$, so any group containing $A_5$ is not soluble. Moreover, for all of the aforementioned products, the product of two soluble groups is soluble. Thus we can easily deduce which of these groups are soluble.

Using Table~\ref{table:coxeter groups}, it is easy to see that all regular polygons, the tetrahedron, the $d$-dimensional cube and orthoplex for $d \le 4$, and the $24$-cell are the only groups with soluble symmetry group.

Now we move onto polytopes with a transitive soluble group action that is not the full automorphism group.

First consider the vertex set $\Delta_{d}$ of the  $d$-dimensional regular simplex embedded in $\mathbb{R}^{d+1}$ with coordinates permutations of $(1,0,0,\ldots,0,0)$. Consider the action of the cyclic group $C_{d+1}$ on $\Delta_{d}$ where $g^i$ acts as a cyclic permutation of the coordinates by $i$ places. That is, if $g$ is the generator of $C_{d+1}$ then for $g^i \in C_{d+1}$ we have
\[g^i:~(x_1,x_2,\ldots,x_{d+1}) ~\mapsto ~(x_{1+i},x_{2+i},\ldots,x_{d+1+i}),\] 
where all subscripts are taken modulo $d+1$. This is clearly transitive on $\Delta_d$.

Next consider the vertex set $Q_d$ of the $d$-dimensional cube in $\mathbb{R}^{d}$  with coordinates $(\pm 1,\ldots, \pm 1)$. Let the soluble group $C_2^d$ act on $Q_d$ by reflections in axis-parallel hyperplanes. In particular, letting $\{-1,1\}$ be the elements of $C_2$ for $(i_1,i_2\ldots i_d) \in C_2^d$ we have \[(i_1,i_2\ldots i_d): ~(x_1,x_2,\ldots,x_d) ~ \mapsto ~ (i_1 x_1, i_2 x_2\ldots,i_d x_d).\]
This is clearly transitive on $Q_d$.

Consider the vertex set $\beta_d$ of the $d$-dimensional orthoplex  in $\mathbb{R}^{d}$ with coordinates permutations of $(\pm 1,0,0,0)$. Let the soluble group $C_2 \times C_d$ act on $\beta_d$ where for ${(i_1,g^{i_2}) \in C_2 \times C_d}$ we have \[(i_1,g^{i_2}):~(x_1,x_2,\ldots,x_d) ~\mapsto ~ (i_1x_{1+i_2},i_1x_{2+i_2},\ldots,i_1x_{d+i_2}),\] 
where all subscripts of $x$ are taken modulo $d$. This is clearly transitive on $\beta_d$.

\begin{figure}[h!]
    \centering
    \includegraphics[width=4.5cm]{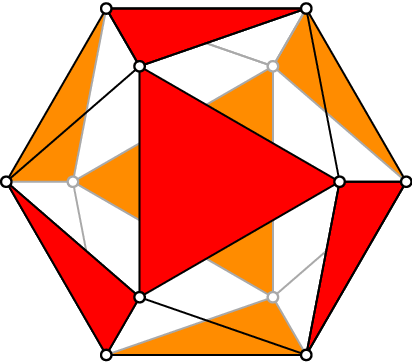}
    \caption{A face-coloured icosahedron with pyritohedral symmetry.}
    \label{fig:icosahedron}
\end{figure}

Now consider the icosahedron. Colour 8 of the faces of the icosahedron red such that the centres of those faces form a cube, as in Figure~\ref{fig:icosahedron}. We will consider the symmetries of the icosahedron with respect to this colouring: that is, those symmetries that map red faces to red faces. This is the set of symmetries of the pyritohedral icosahedron, which is the pyritohedral group $T_h \cong A_4 \times C_2$. This group is clearly soluble. It is straightforward to check that the icosahedron is vertex-transitive under these symmetries. Each vertex is on the boundary of a red face. A series of reflections can send each red face to any other red face, and rotations by $\pi/3$ or $2\pi/3$ about the line through the centre of a red face will send a vertex to the other two vertices of the face. 

    \begin{figure}[h!]
        \centering
        \includegraphics[width=4.5cm]{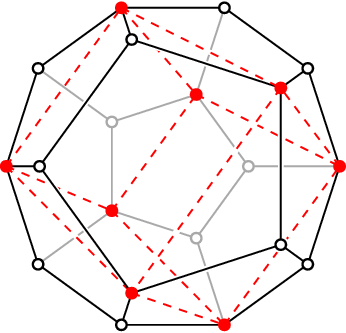}
        \caption{The cube inscribed in the dodecahedron.}
        \label{fig:dodecahedron}
    \end{figure}

Finally, consider the dodecahedron. 
We simply need to check that the conditions of Lemma~\ref{lem:2-orbit-plus-stuff implies soluble} are satisfied. Let $Q \subset X$ be an fixed inscribed cube of the dodecahedron as in Figure~\ref{fig:dodecahedron}. Consider all automorphisms of $X$ that map $Q$ to itself. This is the pyritohedral group $T_h$, a subset of the set of symmetries of the cube. $T_h$ is isomorphic to $A_4 \times C_2$ and is therefore soluble. It is easy to check that $T_h$ has exactly two orbits: $Q$ and $X\setminus Q$.

 We have 
    \[\frac{|Aut(X)||Q|}{|T_h||X|}= \frac{120 \cdot 8}{24 \cdot 20} = 2 \]
    and so the conditions of Lemma~\ref{lem:2-orbit-plus-stuff implies soluble} are satisfied with $H = Aut(X)$, $G = T_h$ and $O_1 = Q$ and $q = 5$.
\end{proof}

Now we will prove the key lemma. 
\begin{proof}[Proof of Lemma~\ref{lem:2-orbit-plus-stuff implies soluble}]
    Fix $y \in O_1$. 
    Note that since $H$ is transitive on $X$, by the orbit-stabilizer theorem we have that $|H|/|X| = |H_y|$, the size of the stabiliser of $y$ in $H$. Since the orbit of $y$ under $G$ is $O_1$, we also have that $|G|/|O_1| = |G_y|$, the size of the stabiliser of $y$ in $G$. Note that $G_y$ is a subgroup of $H_y$ and so $|H_y|/|G_y|  = |H||O_1| / |G||X|$ must be a positive integer. Therefore, by the condition in the statement of the lemma $|H||O_1| / |G||X|$ must be equal to $1$ or $2$.

    Let $s \coloneqq |H|/|G|$ and let $p$ be a prime greater than $s$.
    Define $Y \subseteq X^p$ to be all 
    $(x_1,\ldots,x_p) \in X^p$ such that exactly $|H||O_1| / |G||X|$ of the entries $x_i$ lie in $O_1$.

    First we will show that $Y$ contains a scale copy of $X$: then we can take an appropriate rescaling of $Y$ for our final set as stated in the lemma. Fix any $z \in O_2$. Fix a collection $f_1,f_2,\ldots,f_s$ of elements of $H$, one for each right coset of $G$. 
    For $x \in X$, define  
    \[v(x)  \coloneqq 
    (f_{1}(x), \ldots ,f_{s}(x), \underbrace{z, \ldots, z}_{p - s}~) 
    \]
    and define 
    \[
    X' \coloneq \{v(x) : x \in X\} = \{(f_{1}(x), \ldots ,f_{s}(x), \underbrace{z, \ldots, z}_{p - s}~) : x \in X\} .
    \]
    We have $v(x) \in X^p$ for all $x$ and so $X' \subseteq X^p$. Clearly $X'$ is a copy of $X$ scaled by $\sqrt{s}$. We must show that $X' \subseteq Y$.
    Take any $x \in X$. Let $C(x)$ be the number of entries of $v(x)$ that lie in $O_1$. By definition, $C(x)$ is equal to the number of $i \in [s]$ such that $f_i(x) \in O_1$. 
    Note that all elements of $H$ can be expressed as $gf_i(x)$ for some $1 \le i \le s$ and some $g \in G$, and moreover, $gf_i(x) \in O_1$ if and only if $f_i(x)$ is. 
    Therefore, $C(x)|G|$ is equal to the number of $h \in H$ such that $h(x) \in O_1$. However, as $H$ acts transitively on $X$, the number of $h \in H$ such that $h(x) \in O_1$ is exactly $|H||O_1|/|X|$. 
    Combining these, we get $C(x) = |H||O_1|/|G||X|$. In particular, $v(x) \in Y$ for all $x \in X$. 
 


    Now define $G'$ to be the wreath product $G \wr AGL(1,p)$; that is, $G' =  G^p \rtimes AGL(1,p)$ where \[(g_1,g_2,\ldots,g_p;\phi)\cdot (h_1,h_2,\ldots,h_p;\psi) = (g_1h_{\phi^{-1}(1)},g_2h_{\phi^{-1}(2)},\ldots,g_ph_{\phi^{-1}(p)};\phi\psi).\] As $G$ and  $AGL(1,p)$ are both soluble, so is $G'$. 
    
    We define an action of $G'$ on $Y$.
    For a group element $(g_1,g_2,\ldots,g_p;\phi) \in G'$ and a point $(x_1,x_2,\ldots,x_p) \in Y$, define \[(g_1,g_2,\ldots,g_p;\phi):~(x_1,x_2,\ldots,x_p) \mapsto (g_{1}(x_{\phi^{-1}(1)}), g_{2}(x_{\phi^{-1}(2)}),\ldots, g_{p}(x_{\phi^{-1}(p)}).\] Intuitively, the entries are permuted according to $\phi$, and then group element $g_i$ is applied to the resulting $i$th entry $x_{\phi(i)}$. Since the action of $g_i$ preserves membership of $O_1$ or $O_2$ for all $i$, we can see that $h(Y) \subseteq Y$. 
    Moreover,
    \begin{align*}
    (g_1,\ldots,g_p;\phi)(h_1,\ldots,h_p;\psi)(x_1,\ldots,x_p) &= 
     (g_1,\ldots,g_p;\phi)(h_{1}(x_{\psi^{-1}(1)}),\ldots, h_{p}(x_{\psi^{-1}(p)})
    \\
    &= (g_1h_{\phi^{-1}(1)}(x_{\psi^{-1}\phi^{-1}(1)}),\ldots, g_ph_{\phi^{-1}(p)}(x_{\psi^{-1}\phi^{-1}(p)})) \\
    &= (g_1h_{\phi^{-1}(1)},\ldots,g_ph_{\phi^{-1}(p)};\phi\psi)(x_1,\ldots,x_p) 
    \end{align*}
    and so this action is well-defined.

    All that remains is to show that the action of $G'$ on $Y$ is transitive. 
    Fix a point $y \in O_1$ and $z \in O_2$. 
    
    First suppose that $|H||O_1|/|G||X| = 1$. We will show that $(y,\underbrace{z,\ldots,z}_{p-1}) \in Y$ can be transformed under the action of $G'$ to any $(x_1,\ldots,x_p) \in Y$. Let $x_t$ be the only entry of $(x_1,\ldots,x_p)$ that lies in $O_1$. Let $\phi : \mathbb{Z}_p \rightarrow \mathbb{Z}_p$ send $a \mapsto ta$. Then let $g_t \in G$ be chosen so that $g_t(y) = x_t$ and for $i \ne t$, let $g_i \in G$ be such that $g_i(z) = x_i$. This is possible since $G$ acts transitively on $O_1$ and $O_2$. Then \[(g_1,g_2,\ldots,g_p;\phi)(y,\underbrace{z,\ldots,z}_{p-1}) 
    = (x_1,\ldots,x_p).\]

    Now suppose that $|H||O_1|/|G||X| = 2$. We will similarly show that $(y,y,\underbrace{z,\ldots,z}_{p-2}) \in Y$ can be transformed under the action of $G'$ to any $(x_1,\ldots,x_p) \in Y$ and vice versa. Let $x_t$, $x_s$ be the two entries of $(x_1,\ldots,x_p)$ that lie in $O_1$. 
    Let $\phi : \mathbb{Z}_p \rightarrow \mathbb{Z}_p$ send $a \mapsto (s-t)a + (2t - s)$ so that $\phi(1) = t$ and $\phi(2) = s$. Then let $g_t \in G$ be such that $g_t(y) = x_t$, $g_s$ be such that $g_s(y) = x_s$, and for $i \ne t,s$, let $g_i \in G$ be such that $g_i(z) = x_i$. 
    Then \[(g_1,g_2,\ldots,g_p;\phi)(y,y,\underbrace{z,\ldots,z}_{p-2}) 
    = (x_1,\ldots,x_p)\]
    as required.
\end{proof}

\section{Isosceles trapezia are subsoluble}\label{sec:trapezia}
In this section we prove that isosceles trapezia are subsoluble. This proof is inspired by an observation of Leader, Russell and Walters~\cite{blocksets} that an arbitrary triangle lies in the vertex set of some twisted prism with a $k$-gon base. Since such a twisted prism is acted on transitively by the soluble group $D_{2k}\times C_2$ and  all triangles are therefore subsoluble. This same proof was also explained in greater detail in~\cite{JOHNSON}.
We show that a similar property holds for isosceles trapezia.
\begin{theorem}
    All isosceles trapezia are subsoluble.
\end{theorem}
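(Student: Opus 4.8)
The plan is to follow the triangle argument of Leader, Russell and Walters and show that every isosceles trapezium embeds in the vertex set of a suitable \emph{twisted prism}, a set transitive under a soluble group. For $n\ge 1$, $R,h>0$ and an angle $\gamma\notin\tfrac{2\pi}{n}\mathbb Z$, let $Z=Z(n,R,\gamma)\subset\mathbb R^2$ be the set of the $2n$ points at angles $\pm\gamma/2+2\pi i/n$ $(0\le i<n)$ on the circle of radius $R$ --- two regular $n$-gons of equal circumradius, one rotated by $\gamma$ with respect to the other. Then $Z$ is the orbit of a generic point under the dihedral group $D_{2n}$, hence soluble, and $Y\coloneqq Z\times\{0,h\}\subset\mathbb R^3$ (two parallel copies of $Z$) is the orbit of a point under $D_{2n}\times C_2$, hence soluble. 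I would realise an arbitrary isosceles trapezium as four vertices of such a $Y$, with its axis of symmetry along the reflection plane $\{y=0\}$ of $Y$.

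Concretely, write $\sigma$ for the reflection of $\mathbb R^3$ in $\{y=0\}$ (a symmetry of $Y$, since reflection in the $x$-axis is a symmetry of $Z$), fix an integer $a$ and $\ell\ge 1$, let $\theta=\gamma/2+2\pi a/n$ and $\theta'=\theta+2\pi\ell/n$ both lie in $(0,\pi)$, and take
\[
u_{\pm}=(R\cos\theta,\;\pm R\sin\theta,\;0),\qquad v_{\pm}=(R\cos\theta',\;\pm R\sin\theta',\;h),
\]
all of which are vertices of $Y$. In the cyclic order $u_+,u_-,v_-,v_+$ these four points form a quadrilateral in which $u_+u_-$ and $v_+v_-$ are both parallel to the $y$-axis --- a pair of parallel sides, of lengths $2R\sin\theta$ and $2R\sin\theta'$ --- while the legs $u_+v_+$, $u_-v_-$ have equal length because $\sigma$ swaps them, and the four points are coplanar because the midpoints $(R\cos\theta,0,0)$, $(R\cos\theta',0,h)$ of the two parallel sides lie in $\{y=0\}$ and each vertex differs from its midpoint only in the $y$-coordinate (so all four lie in the plane through $(R\cos\theta,0,0)$ spanned by $(0,1,0)$ and the vector between the midpoints). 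Hence $u_+u_-v_-v_+$ is always an isosceles trapezium, with parallel sides $2R\sin\theta$, $2R\sin\theta'$ and height $\sqrt{R^2(\cos\theta-\cos\theta')^2+h^2}$.

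Finally, given a target isosceles trapezium with parallel sides $P>Q>0$ and height $H>0$ (the rectangle case $P=Q$ being a brick, already known to be subsoluble), I would choose positive integers $\ell<n$ with $c\coloneqq 2\pi\ell/n$ small; let $\theta^*\in(0,\pi-c)$ be the unique solution of $\sin\theta^*/\sin(\theta^*+c)=P/Q$ --- which exists and is unique because $\vartheta\mapsto\sin\vartheta/\sin(\vartheta+c)$ increases from $0$ to $+\infty$ on $(0,\pi-c)$, its derivative being $\sin c/\sin^2(\vartheta+c)$ --- and then take $\theta=\theta^*$, $\theta'=\theta^*+c$, $a=0$, $\gamma=2\theta^*$, $R=P/(2\sin\theta^*)$ and $h=\bigl(H^2-R^2(\cos\theta^*-\cos(\theta^*+c))^2\bigr)^{1/2}$ (perturbing $n$ slightly if necessary so that $\gamma\notin\tfrac{2\pi}{n}\mathbb Z$, which is needed only to keep the $2n$ points of $Z$ distinct). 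With these values the trapezium of the previous paragraph has parallel sides $P$ and $Q$ and height $H$. The only step I expect to require real work is checking that $R^2(\cos\theta^*-\cos(\theta^*+c))^2<H^2$, that is, that the ``horizontal offset'' $R\,|\cos\theta^*-\cos(\theta^*+c)|$ between the two parallel sides can be made smaller than the prescribed height: for a non-rectangular trapezium this offset is necessarily positive, so it cannot be made to vanish, and one makes it small by taking $c$ small. The subtlety is that when $P/Q$ is bounded away from $1$ one has $\theta^*\to\pi$ and $R\to\infty$ as $c\to 0$, so one must check the product stays controlled; writing $\cos\theta^*-\cos(\theta^*+c)=2\sin(\theta^*+c/2)\sin(c/2)$ and analysing the regimes $\theta^*$ near $\pi$ and $\theta^*$ near $\pi/2$ separately, I expect $R\,|\cos\theta^*-\cos(\theta^*+c)|=O(c)$ as $c\to 0$, with implied constant depending only on $P$ and $Q$, so that taking $c$ (equivalently $n$) small enough makes it $<H$ and completes the argument.
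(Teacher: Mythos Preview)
Your approach is correct and is essentially the paper's: both embed the trapezium in a prism $Z\times\{a,b\}$ over a union $Z$ of two regular $n$-gons on a common circle (a single $D_{2n}$-orbit), placing the two parallel sides at the two heights. The only difference is bookkeeping---the paper reaches the parameters by a continuity argument (sliding $B,C$ toward $AD$ until the leg subtends angle $2\pi/k$ at the circumcentre, then restoring the lost height via the prism), whereas you solve explicitly; your flagged offset estimate is indeed right, since $\pi-\theta^*\sim cP/(P-Q)$ gives $R\lvert\cos\theta^*-\cos(\theta^*+c)\rvert\sim c(P+Q)/4\to 0$.
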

\begin{proof}
    Let $ABCD$ be an isosceles trapezium such that the edges $AD$ and $BC$ are parallel, and the edges $AB$ and $CD$ are the same length. Note that all isosceles trapezia are cyclic quadrilaterals. Let  $\mathcal{C}$ be the circle circumscribing $ABCD$ with centre $O$.
    \begin{claim}
        Suppose angle $\angle AOB$ is equal to $2\pi / k$ for some integer $k$. Then there exists a finite set $Y \subseteq \mathcal{C}$ containing the vertices $A,B,C,D$ such that there is a soluble transitive group action on $Y$.
    \end{claim}
    \begin{proof}[Proof of Claim]
        Since $\angle AOB$ is equal to $2\pi / k$ for some integer $k$, there exists a regular $k$-gon $X \subseteq \mathcal{C}$ containing $A$ and $B$. Reflect $X$ in the perpendicular bisector of $AD$ to get a regular $k$-gon $X'$ containing $C$ and $D$.  Note that the perpendicular bisector of $AD$ passes through $O$ and so $X' \subseteq \mathcal{C}$. Let $Y = X \cup X'$. Clearly by construction $Y$ contains $A,B,C,D$. 

        Let $G$ be the group generated by a rotation $r$ about $O$ by $2\pi /k$ and the reflection $s$ in the perpendicular bisector of $AD$. Clearly $G$ acts transitively on $Y$. As $G$ is isomorphic to the dihedral group $D_{2k}$, it is also soluble.
    \end{proof}
    Now suppose that angle $\angle AOB$ is not equal to $2\pi / k$ for any integer $k$. Call the line through $B$ perpendicular to $AD$ the \emph{altitude} from $B$, and similarly for $C$.    
    Consider the isosceles trapezia obtained by moving $B$ and $C$ towards $AD$ along their respective altitudes, calling the resulting vertices $B',C'$. At the limit, as $B,C$ get arbitrarily close to $AD$, the angle $\angle AO'B'$ gets arbitrarily small. Therefore there must be some choice of $B',C'$ where $\angle AO'B'$ is $2\pi /k$ for some integer $k$. Fix this choice of $B'$ and $C'$.

    By the claim, there is a soluble set $Y$ containing $AB'C'D$ that is contained in the circle $\mathcal{C}'$ containing $AB'C'D$. Working now in $3$ dimensions, consider the set $Z = Y\times \{-x,x\}$, where $x$ satisfies $(2x)^2 + |AB'|^2 = |AB|^2$. Note that since $Y$ contains $AB'C'D$ then the set $Z$ contains an isometric copy of $ABCD$. Moreover, letting $G$ be the soluble group that acts transitively on $Y$, then extending $G$ by the reflection in the $xy$-axis gives a soluble group that acts transitively on $Z$.
\end{proof}
\section{Open questions}\label{sec:openqs}

By \kriz's arguments, we know that if a set $X$ is a subset of a transitive set $Y$ with a soluble group action then $X$ is Ramsey. We have observed that in nearly all known examples of Ramsey sets, the converse is true. We ask if the converse holds in general.
\begin{question}\label{q:all subsoluble}
    Let $X \subseteq \mathbb{R}^d$ be Ramsey. Must there exist a soluble set $Y$ containing a copy of $X$? 
\end{question}

\kriz ~in fact proved the stronger result that if $X$ is a finite set with some soluble group acting on $X$ with at most two orbits, then $X$ is Ramsey. 
The $120$-cell and $600$-cell were shown to be Ramsey by repeated application of this stronger result~\cite{cantwell}. In particular, answering the following question would resolve Question~\ref{q:all subsoluble} for all sets currently known to be Ramsey.
\begin{question}\label{q:2-orbit imply soluble}
    Let $X$ be a transitive set and suppose that some soluble group acts on $X$ with two orbits. Does there exist a soluble set $Y$ containing $X$?
\end{question}

We have shown in Theorem~\ref{thm:regular polytopes subsoluble} that nearly all regular polytopes are subsoluble. It would be particularly interesting to determine whether the two remaining regular polytopes are subsoluble. 
\begin{question}\label{q:regular polytopes} Does the vertex set of a regular polytope always embed within some soluble set $Y$? In particular, does this hold for the $120$-cell and the $600$-cell? 
\end{question}
Since the vertex set of the $600$-cell is contained within the vertex set of the $120$-cell, it would be enough to answer Question~\ref{q:regular polytopes} for the $120$-cell. A first attempt (for either case) is to try to duplicate the proof that the dodecahedron is subsoluble: that is, to fix a particular inscribed 4-dimensional regular polytope $Q$ and consider the group $G$ of automorphisms of $X$ that map $Q$ to itself. However, in all cases one of the following conditions necessary to apply Lemma~\ref{lem:2-orbit-plus-stuff implies soluble} fails to hold: either $G$ is not soluble, the action of $G$ on $X$ has more than two orbits, or $\frac{|Aut(X)||Q|}{|G||X|} > 2$. Thus a new idea is required to answer Question~\ref{q:regular polytopes}.

\bigskip

\bigskip

We showed in Theorem~\ref{thm:blocksets subsoluble} that sets containing all permutations of certain sets of coordinates are subsoluble. We wonder whether it is possible to extend this result further. 
\begin{question}
    Let $\alpha_1,\alpha_2,\ldots,\alpha_s \in \mathbb{R}$ (not necessarily distinct), and let $i_1,i_2,\ldots,i_s \ge 0$. 
      Let $X(i_1,i_2,\ldots,i_s) \subset \mathbb{R}^{i_1 + i_2+\ldots+i_s}$ be the set containing permutations of the coordinates \[(\underbrace{\alpha_1,\ldots,\alpha_1}_{i_1},\underbrace{\alpha_2,\ldots,\alpha_2}_{i_2},\ldots,\underbrace{\alpha_s,\ldots,\alpha_s}_{i_s}).\] 
    For which values of $i_1,\ldots,i_s$ is the set $X(i_1,\ldots,i_s)$ subsoluble for any choice of $\alpha_1,\ldots,\alpha_s$?
    In particular, is $X(1,1,1,1,1)$ subsoluble for all  $\alpha_1,\alpha_2,\ldots,\alpha_5$?
\end{question}
These sets arise from block sets in a natural way, and a positive answer to the Block Set Conjecture \cite{blocksets} would imply that $X(i_1,i_2,\ldots,i_s)$ is always Ramsey. However, neither result seems to directly imply the other - we do not know whether being Ramsey implies subsoluble (this is Question~\ref{q:all subsoluble}), and although we have proved that $X(i,j,1,1)$ is always Ramsey we have been unable to deduce anything about the corresponding block sets.

Our approach in proving Theorem~\ref{thm:blocksets subsoluble} cannot be directly extended to any other patterns without a new idea. We use $AGL(1,p)$ in a crucial way within the proof as a group that is both soluble and is $2$-transitive on $\mathbb{Z}_p$: that is, any pair of elements of $\mathbb{Z}_p$ can be sent to any other under the action of  $AGL(1,p)$. To extend the proof to new $X(i_1,i_2,\ldots,i_s)$ we would need to find a soluble group acting on $\{1,2,\ldots,n\}$ for some $n \ge 5$ whose action is $3$-transitive or higher. However, one can use the Classification of Finite Simple Groups to show that such groups do not exist. A list of all multiply transitive groups can be found in Tables 7.3 and 7.4 of~\cite{Cameron}, and it is a matter of routine to check that among these there is no soluble group which acts $3$-transitively on a set of size $\ge 5$.

\bigskip 

 Clearly, a positive answer to Question~\ref{q:all subsoluble} would immediately imply one direction of Conjecture~\ref{conj:rival} that every Ramsey set is a subset of a transitive set. To determine whether a negative answer
to Question~\ref{q:all subsoluble}  would contradict the conjecture, we would need an answer to
the following question.
\begin{question}
Is every transitive set subsoluble?
\end{question}

See Figure~\ref{fig:dependencies} for a diagram of the known and conjectured implications between the properties of finite sets in Euclidean space discussed above.
\begin{figure}[h!]
    \centering
    \includegraphics[scale=.9]{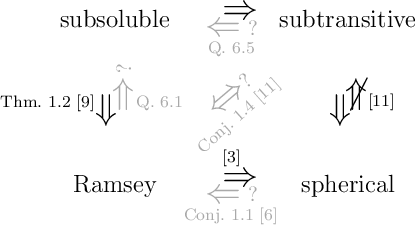}
    \caption{Implications between properties of finite sets in Euclidean space}
    \label{fig:dependencies}
\end{figure}

\section{Acknowledgements}

Thank you to Imre Leader for helpful feedback on the first draft of this paper. Thank you also to Arsenii Sagdeev for drawing my attention to the paper of Karamanlis on simplices~\cite{Karamanlis}.

This research was supported by the European Research Council (ERC) under the European Union Horizon 2020 research and innovation programme (grant agreement No. 947978).

\bibliographystyle{abbrv}
\bibliography{ref}

\end{document}